
 \documentclass[reqno,11pt]{amsart}
 \usepackage{amsmath, epsfig, amsthm, a4, latexsym, amssymb}
 \usepackage{bbm}

\setlength{\topmargin}{0in}
\setlength{\headheight}{0.12in}
\setlength{\headsep}{.40in}
\setlength{\parindent}{1pc}
\setlength{\oddsidemargin}{-0.1in}
\setlength{\evensidemargin}{-0.1in}

\marginparwidth 48pt
\marginparsep 10pt

\oddsidemargin-0.5cm
\evensidemargin-.5cm

\headheight 12pt
\headsep 25pt
\footskip 30pt
\textheight  625pt 
\textwidth 170mm
\columnsep 10pt
\columnseprule 0pt
\setlength{\unitlength}{1mm}

\setlength{\parindent}{20pt}
\setlength{\parskip}{2pt}

\def\@rmrk#1#2{\refstepcounter
    {#1}\@ifnextchar[{\@yrmrk{#1}{#2}}{\@xrmrk{#1}{#2}}}

%
 
\makeatletter\@addtoreset{equation}{section}\makeatother

 \sloppy
 \parskip 0.8ex plus0.3ex minus0.2ex
 \parindent0.0em

 \newfont{\bfit}{cmbxti10 scaled 1200}

 \newcommand{\eps}{\varepsilon}
 \newcommand{\im}{\mathsf i}

 \newcommand{\ball}{\mathcal{B}}
 \newcommand{\annulus}{\mathcal{A}}
 \renewcommand{\circle}{\mathcal{C}}

 \newcommand{\skris}{{\mathcal S}}

 \newcommand{\sfrac}[2]{\mbox{$\frac{#1}{#2}$}}
 \newcommand{\ssup}[1] {{\scriptscriptstyle{({#1}})}}

\newcommand{\N}{\mathbbm{N}}       
\newcommand{\R}{\mathbbm{R}}       
\newcommand{\C}{\mathbbm{C}}       
\newcommand{\E}{\mathbbm{E}}       
\renewcommand{\H}{\mathbbm{H}}     
\newcommand{\one}{\mathbbm{1}}     
\newcommand{\prob}{\mathbbm{P}}    





\renewcommand{\subsection}{\secdef \subsct\sbsect}
\newcommand{\subsct}[2][default]{\refstepcounter{subsection}
\vspace{0.15cm}
{\flushleft\bf \arabic{section}.\arabic{subsection}~\bf #1  }
\nopagebreak\nopagebreak}
\newcommand{\sbsect}[1]{\vspace{0.1cm}\noindent
{\bf #1}\vspace{0.1cm}}

{\nopagebreak {\hfill\rule{2mm}{2mm}}\\ }

\newtheorem{theorem}{Theorem}[section]
\newtheorem{lemma}[theorem]{Lemma}
\newtheorem{definition}[theorem]{Definition}

\newtheoremstyle{thm}{1.5ex}{1.5ex}{\itshape\rmfamily}{}
{\bfseries\rmfamily}{}{2ex}{}

\newtheoremstyle{rem}{1.3ex}{1.3ex}{\rmfamily}{}
{\itshape\rmfamily}{}{1.5ex}{}
\theoremstyle{rem}
\newtheorem{remark}{{\slshape\sffamily Remark}}[]

\refstepcounter{subsubsection}


\mathchardef\ordinarycolon\mathcode`\:
\mathcode`\:=\string"8000
\begingroup \catcode`\:=\active
  \gdef:{\mathrel{\mathop\ordinarycolon}}
\endgroup

\def\thebibliography#1{\section*{Bibliography}
  \list%
  {\arabic{enumi}.}
    {\settowidth\labelwidth{[#1]}\leftmargin\labelwidth
    \advance\leftmargin\labelsep
    \parsep0pt\itemsep0pt
    \usecounter{enumi}}
    \def\newblock{\hskip .11em plus .33em minus .07em}
    \sloppy                   
    \sfcode`\.=1000\relax}



 \begin{document}
\title[Double-points on the Brownian frontier]
{The Hausdorff dimension of the double points on the Brownian frontier}

{\LARGE \bf The Hausdorff dimension of the double points\\ on the Brownian frontier}

\thispagestyle{empty}
\vspace{0.2cm}

\textsc{Richard Kiefer}\\
Fachbereich Mathematik,
Universit\"at Kaiserslautern, 
67653 Kaiserslautern, Germany\\
E--mail: \texttt{kiefer@mathematik.uni-kl.de}

\textsc{Peter M\"orters\footnote{Supported by an Advanced Research Fellowship of the EPSRC.}}\\
Department of Mathematical Sciences,
University of Bath, Bath BA2 7AY, England\\
E--mail: \texttt{maspm@bath.ac.uk}

\vspace{0.5cm}


\begin{quote}{\small {\bf Abstract: }
The frontier of a planar Brownian motion is the boundary of the unbounded component of the 
complement of its range. In this paper we find the Hausdorff dimension of the set of double 
points on the frontier. }
\end{quote}

\begin{quote}{\small {\bf R\'esum\'e: }
Nous d\'eterminons la dimension de Hausdorff de l'ensemble des points doubles situ\'es
sur la fronti\`ere d'un mouvement brownien plan.}
\end{quote}
\vspace{0.5cm}

\begin{tabular}{lp{13cm}}
\multicolumn{2}{l}
{{\it MSC 2000}: Primary 60J65; Secondary 60G17. }\\
{\it Keywords:} & Brownian motion, self-intersections, double points, frontier, 
outer boundary, disconnection exponent, Mandelbrot conjecture, Hausdorff dimension.\\
\end{tabular}

\vspace{0.5cm}

\section{Introduction and statement of the results}

Let $(W_s \colon 0\le s \le \tau)$ be a standard planar Brownian motion running 
up to the first hitting time~$\tau$ of the circle of unit radius around the origin,
and consider the complement of its path, i.e.
$$\big\{x\in \R^2 \colon x\not= W_s \mbox{ for any $0\le s \le \tau$} \big\}.$$
This set is open and can be decomposed into connected components, exactly one
of which is unbounded. We denote this component by $U$ and define its
boundary $\partial U$ as the \emph{frontier} of the Brownian path. Note that in 
this natural setup the frontier is a random closed curve enclosing the origin, 
which is contained in the unit disc and touches the unit circle in exactly one point. 
The frontier can be seen as the set of points on the Brownian path which are accessible from 
infinity and is therefore also called the \emph{outer boundary} of Brownian motion. 
\smallskip

Mandelbrot conjectured, based on a simulation and the analogy of the outer boundary
and the self-avoiding walk, that the Brownian frontier has Hausdorff dimension $4/3$, 
see~\cite{Ma82}. Rigorous confirmation of this conjecture, however, turned out to be a hard 
problem, which took a long time. In the late nineties Bishop, Jones, Pemantle and Peres~\cite{BP97} 
showed that the frontier has Hausdorff dimension strictly larger than one, and about the same 
time Lawler~\cite{La96} identified the Hausdorff dimension in terms of a (then) unknown constant, 
the disconnection exponent~$\xi(2)$. A few years later, Lawler, Schramm and Werner, as one of the 
first applications of their SLE technique, found the explicit value of this constant and thus 
confirmed Mandelbrot's conjecture.%
\smallskip

As a planar Brownian motion has points of any finite (and indeed infinite) multiplicity,
and these points form a dense set of full dimension on the range, it is natural 
to ask whether there are multiple points also on the frontier. 
\smallskip
\pagebreak[3]

To begin with, it is easy to observe that the Brownian frontier must contain \emph{double
points} of the Brownian motion. The argument, which is due to L\'evy~\cite{Le65},
goes roughly like this: If there were no double points on the frontier, it would
by construction contain a stretch of the original Brownian path. This would however imply that 
it had double points, which is a contradiction.
%
Knowing that there are double points on the frontier, it is natural to ask, whether the 
frontier contains \emph{triple points}. This problem was solved by Burdzy and Werner~\cite{BW96},
who showed that, almost surely, there are \emph{no} triple points on the frontier of a planar Brownian motion.
\smallskip

A second natural question that comes up is \emph{how many} double points one can find on the
Brownian frontier. Maybe surprisingly, it turns out that while the set
$$D = \big\{ x\in \R^2 \colon x=W_s=W_t \mbox{ for distinct } s,t\in[0,\tau] \big\}$$ 
of double points has full Hausdorff dimension on the entire path, it does not have full 
dimension on the frontier. The following curious result is the main result of this paper.
\smallskip

\begin{theorem}\label{double}
Almost surely, the set of double points on the Brownian frontier satisfies
$$\dim \big( D \cap \partial U \big) = \frac{\sqrt{97}+1}{24}\, .$$
\end{theorem}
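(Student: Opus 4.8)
The plan is to reduce the theorem to the determination of a single exponent $\beta$ controlling how likely a small ball is to contain a double point that lies on the frontier, and then to turn two-sided control of $\beta$ into matching dimension bounds by the first- and second-moment method. Concretely, for $x$ in the bulk of the unit disc and $\epsilon>0$ set
\[
p_\epsilon(x)=\prob\big(B(x,\epsilon)\cap D\cap\partial U\neq\emptyset\big),
\]
and the goal is to prove $p_\epsilon(x)\asymp\epsilon^{\beta}$ (uniformly in $x$) with
\[
\beta=2-\frac{\sqrt{97}+1}{24}=\frac{47-\sqrt{97}}{24}.
\]
Granting this, the expected number of dyadic $\epsilon$-boxes meeting $D\cap\partial U$ is of order $\epsilon^{-2}p_\epsilon\asymp\epsilon^{-(2-\beta)}$, so for $s>2-\beta$ one has $\expect\!\sum_Q\epsilon^{s}\to0$ and Borel--Cantelli along $\epsilon=2^{-n}$ gives $\dim(D\cap\partial U)\le 2-\beta$ almost surely.

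The first task is to identify $\beta$. Fix a candidate frontier double point $x=W_s=W_t$ and decompose the path at the two visit times into the legs $W[0,s]$, $W[t,\tau]$ and the intervening loop $W[s,t]$; by the strong Markov property these pieces are conditionally independent excursions attached at $x$, producing four strands through $x$. Two competing requirements must hold at once: the two ends of the loop must reconnect at $x$ (the double-point, i.e.\ self-intersection, constraint), while the whole configuration must leave $x$ accessible from infinity (the frontier, i.e.\ non-disconnection, constraint). Writing $\rho\in[\epsilon,1]$ for the scale out to which the self-intersecting loop extends, the event factorises across the annuli $\{\epsilon\le|z-x|\le\rho\}$ and $\{\rho\le|z-x|\le1\}$: on the outer range only the two legs are present and one pays the plain frontier (two-strand disconnection) exponent $\tfrac23$, while on the inner range one pays the exponent of a four-arm event that simultaneously realises the self-intersection and preserves accessibility. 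Inserting the exact Brownian exponents of Lawler--Schramm--Werner and optimising over the internal scale $\rho$ produces a value of $\beta$ solving the quadratic $12\beta^{2}-47\beta+44=0$; the relevant root is $\beta=(47-\sqrt{97})/24$, equivalently $d:=2-\beta=(1+\sqrt{97})/24$ solving $12d^{2}-d-2=0$. The irrationality of the answer is exactly a signature of this internal optimisation together with the non-linear cascade relation through which the Brownian exponents combine.

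For the lower bound I would run the energy method. Build random measures $\mu_\epsilon$ on $D\cap\partial U$ by placing mass $p_\epsilon^{-1}\epsilon^{-2}\Leb|_Q$ on each $\epsilon$-box $Q$ carrying a frontier double point, so that $\expect\,\mu_\epsilon(B)\asymp\Leb(B)$. A subsequential weak limit $\mu$ is supported on $D\cap\partial U$, and to conclude $\dim(D\cap\partial U)\ge s$ for every $s<d$ it suffices to show the $s$-energy $\iint|x-y|^{-s}\,d\mu(x)\,d\mu(y)$ is finite with positive probability; a scaling and zero--one law argument then upgrades this to an almost sure statement. This reduces to the two-point estimate
\[
\prob\big(B(x,\epsilon)\text{ and }B(y,\epsilon)\text{ each meet }D\cap\partial U\big)\asymp p_\epsilon^{2}\,|x-y|^{-\beta},
\]
whose $|x-y|$-dependence yields convergence of the energy precisely for $s<2-\beta=d$.

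The main obstacle is the rigorous two-sided control of these exponents. Both the one-point law $p_\epsilon\asymp\epsilon^{\beta}$ and, more seriously, the two-point law require quasi-multiplicativity and separation estimates for disconnection and non-intersection events; since accessibility from infinity is a genuinely global event, decoupling the behaviour near $x$, near $y$, and near the meeting scale $|x-y|$ is delicate, and one must control the loop that realises each double point without spoiling the conditional independence used in the exponent count. The step I expect to be hardest is showing that the optimisation over the loop scale $\rho$ is actually attained, so that the dominant contribution comes from loops of a single well-defined scale, and matching the exact Lawler--Schramm--Werner exponents to the four-strand combinatorics: this is exactly the place where the quadratic, and hence the constant $(1+\sqrt{97})/24$, is produced.
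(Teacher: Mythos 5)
Your overall architecture --- first- and second-moment estimates for the probability that a dyadic box of side $\epsilon$ carries a frontier double point, an energy/mass-distribution argument for the lower bound, and a zero--one law to upgrade positive probability to an almost sure statement --- is exactly the strategy of the paper, and your one-point and two-point estimates have the correct form. The genuine gap is in the step you present as the conceptual core: the identification of the exponent $\beta$. There is no optimisation over an internal loop scale $\rho$, and no competition between a two-strand cost outside scale $\rho$ and a four-arm cost inside it. The correct statement is simply $\beta=\xi(4)$, the disconnection exponent for \emph{four} independent Brownian strands, whose value $\xi(4)=\frac{(\sqrt{97}-1)^2-4}{48}=\frac{47-\sqrt{97}}{24}$ is taken directly from Lawler--Schramm--Werner. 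The quadratic $12\beta^{2}-47\beta+44=0$ is nothing but the minimal polynomial of this number over $\Q$; it is not ``produced'' by any scale optimisation, and the irrationality is inherited from the formula $\xi(k)=((\sqrt{24k+1}-1)^2-4)/48$ at $k=4$, not from a cascade relation. Indeed, if one sets up the two-scale cost you describe, say $\rho^{\xi(2)}(\epsilon/\rho)^{\eta}$ with $\eta>\xi(2)$, the optimum is degenerate at $\rho\asymp 1$ and simply returns $\epsilon^{\eta}$; no new algebraic number can arise this way. The reason four strands persist out to macroscopic scale is that every double point $x=W_s=W_t$ has its connecting loop $W[s,t]$ reaching some fixed distance $\delta>0$ from $x$, and one takes a countable union over $\delta$; for each fixed $\delta$ the two passages through the $\epsilon$-ball, each split at the hitting time of the ball, give four strands across every annulus from scale $\epsilon$ out to scale $\delta$.

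A second, related gap: the self-intersection (return) constraint contributes nothing to the power of $\epsilon$ --- the probability that a strand started at distance of order $\delta$ re-enters $\ball(x,\epsilon)$ is of order $1/\log(1/\epsilon)$, a polynomial correction, which is why the sharp estimate carries the factor $n^{k}$ in front of $e^{-n\xi(2k)}$ (Theorem~\ref{exp}). Controlling the interplay between these polynomial factors and the exponential factor, and proving the lower bound by gluing path pieces without destroying non-disconnection (the $\alpha$-nice configurations of Lemmas~\ref{lsw} and~\ref{lowandup}, built on the Lawler--Schramm--Werner extremal-distance estimates for excursions), is precisely the content you defer as ``the main obstacle''; as written, the proposal contains neither a derivation of the one-point law $p_\epsilon\asymp\epsilon^{\xi(4)}$ nor of the two-point law, so the argument does not yet close.
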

\smallskip

\begin{remark} By a variation of the proof one can see that the same formula holds when the 
Brownian motion is stopped at a fixed time, rather than the exit time~$\tau$ from the unit disc.
\end{remark}
\medskip

We would like to point out that our proof of Theorem~\ref{double} uses a technique 
different from that of~\cite{La96}. The latter paper works in the time domain and 
uses Kaufman's dimension doubling lemma, see e.g. \cite[Theorem~9.23]{MP08}, to move 
to the spatial domain. We found this approach not suitable to deal with the lower
bound in the case of double points, in particular as it would require rather delicate 
estimates for Brownian bridges. Instead, as in other problems related to double points, 
see e.g.~\cite{KM05}, it is preferable to work directly in the plane. In the next chapter 
we give an accessible sketch 
of the proof, and also state sharp estimates for disconnection probabilities, which are at 
the heart of our argument, and may be of independent interest, see Theorem~\ref{exp}. 
Full technical details of the proof are given in the final chapter.

\section{Proof of Theorem~\ref{double}: Framework and ideas.}

In the proof of this result, we consider Brownian motion up to the first exit time~$\tau$ from a 
disc around the origin of fixed radius~$R$, say larger than two, and adapt the definition of the set~$D$
of double points and the frontier~$U$ accordingly. We fix a compact square~$S_0$ of unit sidelength 
contained in this disc, which does not contain the origin, and $\delta>0$ smaller than 
half the distance 
of $S_0$ to the origin. Let $\mathcal S_n$ be the collection  containing those of the 
$2^{2n}$ nonoverlapping compact subsquares~$S\subset S_0$ of sidelength~$2^{-n}$ that satisfy
\begin{itemize}
\item the Brownian motion $(W_s \colon s \ge 0)$ hits the square $S$, 
moves a distance of order~$\delta$, and then hits $S$ again before 
the killing time~$\tau$;
\item the union of the paths outside the square~$S$ does not 
disconnect its boundary $\partial S$ from infinity. 
\end{itemize}
The exact definition is such that, for some integer~$N(\delta)\in\N$, when $N(\delta)\le m \le n$, and $S\in\mathcal S_n$ then all dyadic squares $T\supset S$ with sidelength $2^{-m}$ are in $\mathcal S_m$. We further have
$$S_0 \cap D \cap \partial U = \bigcup_{\delta>0} \, \bigcap_{n=N(\delta)}^\infty \, \bigcup_{S \in {\mathcal S}_n} 
\, S,$$
and the Hausdorff dimension can be determined with positive probability by verifying
first and second moment criteria: Let $\xi>0$ and assume that there exist constants $c_1,c_2,c_3>0$ 
such that
\begin{itemize}
\item[(i)] for any dyadic subsquare $S\subset S_0$ of sidelength $2^{-n}$, we have 
$$c_1 2^{-\xi n} \leq \prob\big(S \in {\mathcal S}_n\big)  \leq c_2 2^{-\xi n};$$
\item[(ii)]for any pair of dyadic subsquares $S,T\subset S_0$ of sidelength $2^{-n}$
with distance of order $2^{-m}$, $1\le m \le n$, we have
$$\prob\big(S,T \in {\mathcal S}_n\big) \leq c_3 \, 2^{-2\xi n} 2^{\xi m}.$$
\end{itemize}
These conditions imply that $\dim(S_0 \cap D \cap \partial U)\le 2-\xi$ almost surely
and $ \dim(S_0 \cap D \cap \partial U)\ge 2-\xi$ with positive probability,
see  \cite[Theorem 10.43]{MP08}. This is a standard technique (sometimes called \emph{second moment method})
in fractal geometry and not too hard to verify, for example using the mass distribution principle.
\smallskip%

To get hold of the constant~$\xi$ we first recall the definition of the 
\emph{disconnection exponents} of planar Brownian motion. Let
$\ball(z,r)$ the open disc of radius $r$ with centre~$z$ and suppose 
$(W^{\ssup i}_s \colon s \ge 0)$, for $i\in\{1,\ldots,k\}$, are independent Brownian motions
started on the unit circle $\partial\ball(0,1)$, and stopped upon leaving the 
concentric disc $\ball(0,e^n)$ of radius~$e^n$. 
We denote by ${\mathfrak B}_n$ the union of their paths, and by $V_n$ the event that 
the set ${\mathfrak B}_n$ does not disconnect the origin from infinity, i.e.\ 
the origin is in the unbounded connected component 
of the complement of~${\mathfrak B}_n$. The disconnection exponent $\xi(k)$
is then defined by the requirement that there exist positive constants $c_1$ and $c_2$ such that,
for any $n\in\N$,
\begin{equation}\label{exp_def}
c_1\, \exp\{-n\,\xi(k)\} \le \prob(V_n)\le c_2\, \exp\{-n\,\xi(k)\}\, . \\[1mm]
\end{equation}
Lawler~\cite{La96} showed that the disconnection exponents are well-defined by
this requirement, and Lawler, Schramm and Werner~\cite{LSW01} found the explicit values 
$$\xi(k)= \frac{(\sqrt{24k+1}-1)^2-4}{48}.$$ 

The intuition behind our proof is that locally the paths of a Brownian motion seen from a 
typical double point look like \emph{four} Brownian motions started at this point. Roughly
speaking, each of the two segments of the path crossing in the double point, is split into 
a part prior to hitting the double point, and a part after hitting the double point, amounting
to four paths altogether. Hence the probability that a disc or square of diameter~$2^{-n}$ 
containing this double point is not disconnected from infinity by these paths should be of 
order $2^{-n \xi(4)}$. In reality, things are a bit more delicate and this observation is 
only correct up to a factor, which is polynomial in~$n$. Indeed, when we place a small disc 
around a potential double point, and split a path, which is conditioned to hit this disc,
at the first hitting time, the time-reversal of the path up to this instant spends somewhat 
less time in the critical area near the disc and therefore non-disconnection probabilities 
are slightly larger than for Brownian motion starting on the circle. Here is the rigorous statement 
behind our argument.

\begin{theorem}\label{exp}
Suppose  $(W^{\ssup i}_s \colon s \ge 0)$ for $i\in\{1,\ldots, k\}$ are independent Brownian motions
started uniformly on the circle $\partial\ball(0,\frac12\,e^n)$, and stopped upon leaving the disc $\ball(0,e^{n})$, 
$n\ge 2$, i.e. at times $$T^{\ssup i}_n=\inf\{ s>0 \colon |W^{\ssup i}_s|=e^n\}.$$ Denote by
$${\mathfrak B}_n = \bigcup_{i=1}^{k} \big\{ W^{\ssup i}_s \colon 0 \le s \le T^{\ssup i}_n\big\}$$
the union of the paths, and by $V_n$ the event that ${\mathfrak B}_n$ does not disconnect
the unit disc $\ball(0,1)$ from infinity. Then there exist constant $c_1,c_2>0$ independent of~$n$
and the starting positions, such that 
$$c_1\, n^{k} \, e^{-n \xi(2k)} \le 
\prob\big( V_n \, \big| \, T^{\ssup i}_0<T^{\ssup i}_n \mbox{ for all $1\le i \le k$ }\big) 
\le c_2\, n^{k} \, e^{-n \xi(2k)}.$$
\end{theorem}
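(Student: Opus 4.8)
\emph{Proof proposal.} The plan is to write the conditional probability as a ratio, evaluate the denominator exactly, and reduce the statement to a disconnection estimate for $2k$ crossings of an annulus. Writing $T^{\ssup i}_0=\inf\{s>0\colon|W^{\ssup i}_s|=1\}$ and $A=\{T^{\ssup i}_0<T^{\ssup i}_n\text{ for all }1\le i\le k\}$, we have
$$\prob\big(V_n\mid A\big)=\frac{\prob(V_n\cap A)}{\prob(A)}.$$
Since the paths are independent and the radial parts $|W^{\ssup i}_s|$ are time-changed one-dimensional Brownian motions, a gambler's ruin computation in the logarithmic variable gives, for a single path started at radius $\tfrac12 e^n$,
$$\prob\big(T^{\ssup i}_0<T^{\ssup i}_n\big)=\frac{\log e^n-\log(\tfrac12 e^n)}{\log e^n-\log 1}=\frac{\log 2}{n},$$
independently of the angular starting position, so that $\prob(A)=(\log 2)^k\,n^{-k}$. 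It therefore remains to prove that $\prob(V_n\cap A)\asymp e^{-n\xi(2k)}$, with constants independent of the starting positions; dividing by $\prob(A)$ then yields the asserted order $n^k e^{-n\xi(2k)}$.

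The heart of the matter is to identify the correct number of crossings. On the event $A$ I would split each path at its first hitting time $T^{\ssup i}_0$ of the unit circle into an \emph{incoming} piece, running from radius $\tfrac12 e^n$ to the unit circle, and an \emph{outgoing} piece, running from the unit circle out to radius $e^n$. By the strong Markov property the $k$ outgoing pieces are independent Brownian motions started on $\partial\ball(0,1)$ and stopped on leaving $\ball(0,e^n)$, that is, precisely the crossings appearing in the definition~\eqref{exp_def} of $\xi(\cdot)$. Since the event $V_n$ depends only on the trace ${\mathfrak B}_n$ and not on the direction of time, I would describe the incoming pieces through their time reversal: the reversed incoming piece is a Brownian path issued from the unit circle and conditioned to reach radius $\tfrac12 e^n$ before returning, an $h$-transform whose trace is again a clean crossing of the annulus $\ball(0,\tfrac12 e^n)\setminus\ball(0,1)$. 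In this way the trace of ${\mathfrak B}_n$ on $A$ consists of $2k$ crossings of the inner annulus $\{1\le|z|\le\tfrac12 e^n\}$, of which only the $k$ outgoing ones continue through the outer shell $\{\tfrac12 e^n\le|z|\le e^n\}$.

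The estimate $\prob(V_n\cap A)\asymp e^{-n\xi(2k)}$ then follows from the quasi-multiplicativity of disconnection probabilities across concentric annuli. Non-disconnection of the inner annulus by the $2k$ crossings has probability of order $e^{-\xi(2k)(n-\log 2)}\asymp e^{-n\xi(2k)}$ by the definition of $\xi(2k)$ (non-disconnection of $\ball(0,1)$ and of the origin agreeing up to constants), while non-disconnection of the outer shell $\{\tfrac12 e^n\le|z|\le e^n\}$, whose radii have bounded ratio, by the $k$ outgoing crossings contributes only a constant factor of order $e^{-\xi(k)\log 2}$. Concatenating the two scales with only a bounded loss, and making the constants independent of the starting positions, relies on the separation lemmas guaranteeing that the incoming and outgoing pieces meet the unit circle at well-separated points, so that they may legitimately be counted as $2k$ distinct crossings. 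For the lower bound I would, conversely, force the $k$ paths to cross both annuli along well-separated tubes in a non-disconnecting pattern, at a cost of again $e^{-n\xi(2k)}$ up to a constant.

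The main obstacle I anticipate is controlling the effect of the conditioning on the incoming pieces: one must show that conditioning a crossing on hitting the unit circle before leaving $\ball(0,e^n)$ does not distort its disconnection exponent. This is exactly the content of the time-reversal and $h$-transform identification above, combined with a separation estimate ensuring that the $2k$ arms remain well-separated near the unit circle so that the inner- and outer-scale events decouple up to constants. These delicate steps, in the spirit of Lawler's analysis of the disconnection exponents, are what has to be carried out in full detail in the final chapter.
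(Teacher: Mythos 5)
Your overall strategy is exactly the one the paper uses: write the conditional probability as a ratio, compute $\prob(T^{\ssup i}_0<T^{\ssup i}_n)=\log 2/n$ from \eqref{hitting_rho}, and reduce the theorem to the statement that $k$ inward and $k$ outward crossings of the annulus between $\circle_0$ and (roughly) $\circle_n$ fail to disconnect with probability $\asymp e^{-n\xi(2k)}$. In the paper that last statement is precisely Lemma~\ref{lowandup} applied with $2k$ paths and $\ell=k$, and Theorem~\ref{exp} is noted there to be an immediate consequence. Where you diverge is in how the $2k$-arm estimate is to be proved: you propose a two-scale quasi-multiplicativity argument (inner annulus carrying $2k$ arms, outer shell of bounded modulus carrying $k$ arms) glued together by separation lemmas in the style of Lawler's original analysis, whereas the paper extracts from each crossing a geometric number of excursion attempts, applies the Lawler--Schramm--Werner extremal-distance estimate for excursions (Lemma~\ref{lsw}, which carries the polynomial factor $(n_2-n_1)^k$), and observes that the geometric success probability $1/(n_2-n_1+1)$ exactly cancels that factor; no gluing across scales and hence no separation lemma is needed, the hard analytic input being imported wholesale from \cite{LSW02}. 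Both routes can be made to work, but one point in your sketch needs repair: the incoming and outgoing pieces of the \emph{same} path meet the unit circle at the \emph{same} point (the split is at the first hitting time $T^{\ssup i}_0$), so they are certainly not ``well-separated'' there. The reason they may nevertheless be counted as two essentially independent arms is the Harnack principle together with the strong Markov property (the paper's Lemma~\ref{start}), which makes the joint law of the pieces mutually absolutely continuous, with bounded densities, with respect to that of independent crossings started uniformly on the relevant circles; separation enters only if you insist on concatenating estimates across concentric scales, which the paper's route avoids.
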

\medskip
\pagebreak[2]

Considering that $\prob(T^{\ssup i}_0<T^{\ssup i}_n)$ is a constant multiple of $\frac 1n$, and 
applying Brownian scaling, we infer from this that~$(i)$ holds with $\xi=\xi(4)$. The fact that 
Theorem~\ref{exp} also yields the second order estimate~$(ii)$ is best explained by Figure~\ref{f1} 
below. If both squares marked~$S$ and $T$ are in $\skris_n$, then for each of the three shaded 
discs the smaller unshaded disc inside is visited by two independent pieces of the Brownian motion, 
and the union of the pieces outside the smaller discs does not disconnect the smaller discs from infinity. 
Using a scaled version of Theorem~\ref{exp} ---  once with interior radius of order~$2^{-m}$ and exterior radius
of order~$\delta$, and  twice with interior radius of order~$2^{-n}$  and exterior radius
of order~$2^{-m}$ --- and considering also the hitting probabilities, gives a bound of 
$2^{-m \xi(4)}\, 2^{-2(n-m) \xi(4)}$, which readily implies~$(ii)$.\smallskip

\begin{figure}[htbp]
  \centerline{ \hbox{ \psfig{file=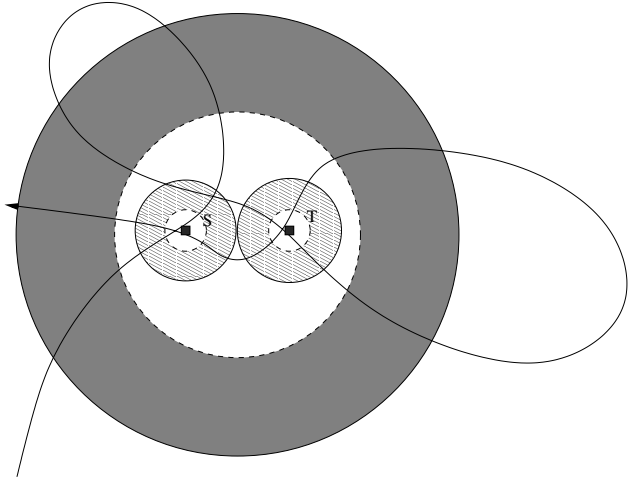,height=2.5in}}}
  \caption{The second moment estimate. The large shaded disc has radius of order~$\delta$,
  the unshaded disc inside has radius of order~$2^{-m}$, as do the two disjoint lightly shaded ones 
  inside it. The tiny discs around the squares~$S$ and~$T$ have radius of order $2^{-n}$.}
  \label{f1}
\end{figure}

These arguments show that, with positive probability,
$$\dim \big( D \cap \partial U \big) = 2- \xi(4) = \frac{\sqrt{97}+1}{24}\, .$$
To verify that this holds almost surely, we observe that $W_\tau$ is always a point on the 
frontier. The previous arguments can be adapted to show that there is a positive probability that the
double points on the frontier intersected with any small disc around this point
have the given Hausdorff dimension. Then a variant of Blumenthal's zero-one law can be applied and
yields the result with probability one.%
\smallskip
\pagebreak[2]

Let us mention that our technique of proof can also be used to show that
the dimension of the frontier itself is~$2-\xi(2)=4/3$. However, in this case the original 
proof given by Lawler~\cite{La96} is easier. Also, the non-existence of triple points 
on the frontier follows rather easily from the fact that $\xi(6)>2$, and indeed most 
of~\cite{BW96} is devoted to the derivation of this estimate,  at a time when exact values 
of intersection exponents were not yet available.
\smallskip
\pagebreak[2]

We would further like to note that our method can be used to estimate the dimension of another 
type of sets. Burdzy and Werner conjectured~\cite{BW96} that there are no times~$t$ 
such that $W_t$ is a triple point on the boundary of the set $\{W_s \colon s\in [0,t]\}$. 
These points may be called \emph{pioneer-triple points}. Analogously defining pioneer-double and 
ordinary pioneer points of Brownian motion, our technique can be used to give the dimension of
these sets as $2-\xi(5)$, $2-\xi(3)$ and $2-\xi(1)$, the latter being already known from~\cite{LSW01}. 
Unfortunately $2-\xi(5)$ is equal to zero, so that our result neither proves nor disproves the conjecture 
of Burdzy and Werner.
\bigskip
\pagebreak[2]

\section{Proof of Theorem~\ref{double}: Details.}

For a Brownian motion $(W_t \colon t\ge 0)$ and sets $A_1, A_2,\ldots$ we define recursively
$$\begin{aligned}\tau(A_1) & := \inf \big\{t>0 \colon W_t\in A_1\big\},\\
\tau(A_1, \ldots, A_n) & := \inf \big\{t> \tau(A_1, \ldots, A_{n-1}) \colon W_t\in A_n\big\}.
\end{aligned}$$
For any bounded set~$A$ we denote by $\ball(A,r)$ the disc of radius $r$ around the barycentre of~$A$. 
\smallskip

We keep the notation introduced in the previous section; recall in particular the meaning of the fixed 
parameters~$\delta>0$ and $R>2$, on which all constants of this section may depend.
Divide $S_0$ into its dyadic subsquares, 
$$S^{i,j}_n := [x+i2^{-n},x+(i+1)1^{-n}] \times [y+j2^{-n},y+(j+1)2^{-n}], 
\quad \mbox{ for } i,j\in\{0,\ldots, 2^n-1\},$$  
where $(x,y)\in\R^2$ denotes the bottom left corner of $S_0$. 
\smallskip

\begin{definition}
Let $N(\delta)$ be the smallest integer satisfying~$2^{-n}<\delta/16$.
For $n\ge N(\delta)$ define the collection $\mathcal S_n$ of $\delta$-good squares to be the set of all 
$S=S^{\scriptscriptstyle{i,j}}_n$ with the following properties:
\begin{itemize}
\item[(1)] $S$ is visited twice and between the visits the motion travels a distance close to $\delta$; more precisely
$$\tau(S,\partial\ball(S, \delta- \sfrac1{\sqrt{2}}\,2^{-n}), S)< \tau;$$
\item[(2)] $\ball(S,2^{-n})$ is not disconnected from infinity by the path~$\{W_s \colon s\in [0,\tau]\}$.\\[-3mm]
\end{itemize}
We write $S_n$ for the union of all $S\in \mathcal S_n$.
\end{definition}

The difference of $(1/\sqrt{2})\, 2^{-n}$ between the subtractive corrections for $n-1$ and $n$ is exactly the distance between the centres of two dyadic
squares $S^{\scriptscriptstyle{i,j}}_n \subset S^{\scriptscriptstyle{k,l}}_{n-1}$. It ensures that if $S^{\scriptscriptstyle{i,j}}_n$ is $\delta$-good, then so is $S_{n-1}^{\scriptscriptstyle{k,l}}$. 
It is easy to see that a point~$x$ which is contained in every member of a decreasing 
sequence of $\delta$-good squares is a double point, with two visits to~$x$ separated by an excursion
reaching~$\partial\ball(x,\delta)$. We therefore have
$$S_0 \cap D \cap \partial U = \bigcup_{\delta>0} \, \bigcap_{n=N(\delta)}^\infty \, \bigcup_{S \in {\mathcal S}_n} 
\, S.$$
As explained in the previous section, the main step in the proof is to establish the following lemma,
which implies that $\dim (D \cap \partial U)\le 2-\xi(4)$ almost surely, and
$\dim (D \cap \partial U)\ge 2-\xi(4)$ with positive probability.
\pagebreak[2]

\begin{lemma}\label{main}
There exist constants $c_1,c_2,c_3>0$ such that for any $n\ge m \ge N(\delta)$ and any
dyadic subsquare $S\subset S_0$ of sidelength $2^{-n}$, we have 
$$c_1 \, 2^{-\xi(4) n} \leq \prob\big(S \in {\mathcal S}_n\big)  \leq c_2 \, 2^{-\xi(4) n},$$
and for any pair of dyadic subsquares $S,T\subset S_0$ of sidelength $2^{-n}$
with distance in $[\frac 12 2^{-m},2^{-m}]$,  we have
$$\prob\big(S,T \in {\mathcal S}_n\big) \leq c_3 \, 2^{-2\xi(4) n} \, 2^{\xi(4) m}.$$
\end{lemma}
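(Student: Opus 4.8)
The plan is to reduce both estimates in Lemma~\ref{main} to the disconnection estimate of Theorem~\ref{exp} by carefully decoupling the two defining properties of a $\delta$-good square: the \emph{hitting requirement} (1), that the square is visited twice with an intervening excursion of size $\delta$, and the \emph{non-disconnection requirement} (2), that the associated small disc is not cut off from infinity. First I would treat the upper and lower bounds on $\prob(S\in\mathcal S_n)$. Writing $r=2^{-n}$ for the sidelength, I would split the path at the first hitting time of $S$, the subsequent hitting time of $\partial\ball(S,\delta-\frac1{\sqrt2}r)$, and the next return to $S$. By the strong Markov property, the event that $S$ is visited twice in this fashion factors into a product of hitting probabilities. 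Near the double point, the two excursions of the path (the incoming and outgoing pieces of each of the two crossings) look, after splitting at the hitting times and time-reversal, like $2k=4$ independent Brownian motions emanating from a disc of radius of order $2^{-n}$; the non-disconnection event $V$ then matches the event in Theorem~\ref{exp} with $k=2$. Applying that theorem with interior radius of order $2^{-n}$ and exterior radius of order $\delta$, together with Brownian scaling over the $n$ relevant scales, converts the $e^{-n\xi(2k)}$ factor into $2^{-\xi(4)n}$; the polynomial factor $n^{k}$ from the theorem is exactly cancelled by the product of the hitting probabilities $\prob(T^{\ssup i}_0<T^{\ssup i}_n)$, each a constant multiple of $1/n$, leaving the clean power $2^{-\xi(4)n}$ with no logarithmic corrections. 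This is the mechanism sketched after Theorem~\ref{exp}, and making it rigorous is the core of the first assertion.

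For the second moment bound I would follow the geometry of Figure~\ref{f1}. When both $S$ and $T$ lie in $\mathcal S_n$ and their distance is of order $2^{-m}$, the path must realize a nested collection of non-disconnection events at three different scales: one large event at scale $\delta$ down to scale $2^{-m}$ governing the common disc that surrounds both squares, and two smaller events, one for each of $S$ and $T$, at scale $2^{-m}$ down to $2^{-n}$. The key point is that these three events can be arranged, via the strong Markov property and the monotonicity built into the definition (so that $S\in\mathcal S_n$ forces the ancestor square at scale $2^{-m}$ to be good), to hold on disjoint portions of the path, so that their probabilities multiply. I would apply the scaled Theorem~\ref{exp} three times: once with interior radius of order $2^{-m}$ and exterior radius of order $\delta$, yielding a factor $2^{-\xi(4)m}$, and twice with interior radius of order $2^{-n}$ and exterior radius of order $2^{-m}$, each yielding a factor $2^{-(n-m)\xi(4)}$. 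Multiplying gives $2^{-\xi(4)m}2^{-2(n-m)\xi(4)}=2^{-2\xi(4)n}2^{\xi(4)m}$, which is exactly the claimed bound.

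The main obstacle I anticipate is not the arithmetic of the exponents but the \emph{decoupling} that justifies treating these nested disconnection events as independent. A priori the non-disconnection requirement involves the entire path $\{W_s\colon s\in[0,\tau]\}$, so the events at different scales share path segments and are genuinely correlated. To handle this I would use the standard device of separating the path into excursions between concentric circles and invoking the strong Markov property at the successive hitting times, together with the quasi-independence of Brownian excursions across well-separated annuli; one shows that conditioning on the coarse-scale configuration only distorts the fine-scale disconnection probabilities by bounded multiplicative constants, which can be absorbed into $c_3$. A secondary technical point is controlling the time-reversed piece of the path near each small disc so that Theorem~\ref{exp}, which is stated for motions started uniformly on the half-radius circle and conditioned on an initial excursion to the inner disc, applies with constants uniform in the starting positions; this uniformity, explicitly asserted in Theorem~\ref{exp}, is what lets the Markovian splitting go through without accumulating scale-dependent errors. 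Once the decoupling is in place, both bounds follow by assembling the factors as described above.
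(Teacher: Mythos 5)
Your two upper bounds (the first--moment upper bound and the second--moment estimate) follow essentially the paper's own route: split the path at suitable stopping times, use the Harnack principle to treat the resulting pieces as independent up to constant factors, and apply the disconnection estimate at the appropriate scales; the exponent arithmetic $2^{-m\xi(4)}\,2^{-2(n-m)\xi(4)}=2^{-2\xi(4)n}2^{\xi(4)m}$ and the cancellation of the polynomial factor $n^{k}$ against the hitting probabilities are exactly as in the paper. (The paper actually works with the unconditioned crossing version of Theorem~\ref{exp}, namely Lemma~\ref{lowandup}, and it must sum over the eight possible orders in which $S$, $T$ and the separating circle are visited, as well as treat the degenerate case where $S$ and $T$ are within a few dyadic scales of each other; but these are presentational rather than conceptual differences.)

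The genuine gap is in the lower bound $c_1\,2^{-\xi(4)n}\le \prob(S\in\mathcal S_n)$. The event $\{S\in\mathcal S_n\}$ requires that the \emph{entire} path $\{W_s\colon s\in[0,\tau]\}$ fail to disconnect $\ball(S,2^{-n})$ from infinity, not merely the four pieces lying in the annulus between radius $2^{-n}$ and radius of order $\delta$ around $S$. Theorem~\ref{exp} gives a lower bound only for the non-disconnection probability of those four pieces; it says nothing about the initial segment before the path first approaches $S$, the part of the $\delta$-excursion outside $\ball(S,\frac\delta2)$, the short segments between consecutive hitting times near $S$, or the final segment after the second visit, any of which could close off the small disc and destroy the event. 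To close the lower bound one must exhibit an explicit positive-probability strategy for these remaining pieces (in the paper: confining them to narrow strips and small balls and requiring the connecting excursion not to disconnect a designated point $x$ on $\partial\ball(S,\frac\delta2)$), and, crucially, one needs the four central curves to form an $\alpha$-\emph{nice} configuration --- leaving gaps of definite relative size around their endpoints through which the remaining pieces can be threaded --- at only constant multiplicative cost. That is precisely the content of the $p_\alpha$ and $q_\alpha$ lower bounds in Lemmas~\ref{lsw} and~\ref{lowandup}, which is strictly stronger than Theorem~\ref{exp} and is not produced by your plan; without it the lower bound does not follow.
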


The following three sections are devoted to the proof of this lemma. In the course of the proof,
we also provide the arguments needed to prove Theorem~\ref{exp}. The proof of Theorem~\ref{double}
is then completed in Section~3.5 by means of a zero-one argument.
\medskip

\subsection{From Brownian paths to excursions.}
Given an annulus $A={\rm cl}\,\ball(x,r_1 \vee r_2)\setminus\ball(x,r_1 \wedge r_2)$ we define
an \emph{excursion} from $\partial\ball(x,r_1)$ to $\partial\ball(x,r_2)$ as a continuous curve 
$\gamma\colon[0,\tau]\to A$ with 
$$\gamma[0,\tau] \cap \partial\ball(x,r_1)=\{\gamma(0)\}, \mbox{  and } 
\gamma[0,\tau] \cap \partial\ball(x,r_2)=\{\gamma(\tau)\}.$$ 
To define a Brownian excursion, start a Brownian motion $\{W_t \colon t\ge 0)$ uniformly 
on~$\partial\ball(x,r_1)$ and define
$\sigma= \sup\{t\le \tau(\partial\ball(x,r_2)) \colon W_t\in\partial\ball(x,r_1)\}$.
Then the random curve $(Y_t \colon 0\le t\le \tau)$ with $\tau=\tau(\partial\ball(x,r_2))-\sigma$
and $Y_t=W_{t+\sigma}$ defines a \emph{Brownian excursion} from $\partial\ball(x,r_1)$ to 
$\partial\ball(x,r_2)$. As described, for example, in~\cite{LSW02} or \cite{MS08}, 
the time-reversal of a Brownian excursion from $\partial\ball(x,r_1)$ to  
$\partial\ball(x,r_2)$ is a Brownian excursion from $\partial\ball(x,r_2)$ to $\partial\ball(x,r_1)$.
\medskip

Fix a square $S\subset S_0$ of sidelength $r_1:=2^{-n}$ and radii $r_1<r_2< r_3$ sufficiently small to ensure
$r_3<\delta$ and $r_2<\frac23\, r_3$. With such a configuration we associate natural curves and excursions embedded in a Brownian 
motion~$(W_s \colon s\ge 0)$ as follows: Let
$$\begin{aligned}
t_1^{\ssup1} & = \tau\big(\ball(S,r_2)\big), \qquad
&  t_2^{\ssup1} & = \tau\big(\ball(S,r_1)\big),\\[1mm]
t_3^{\ssup1} & = \tau\big(S, \partial\ball(S,r_1) \big), \qquad
&  t_4^{\ssup1} & = \tau\big(S, \partial\ball(S,r_2)\big),\\[1mm]
\end{aligned}$$
and define the curves
$$\begin{aligned}
W_1^{\ssup 1}\colon & [0,t_2^{\ssup 1}-t^{\ssup1}_1] \to  \R^2\setminus\ball\big(S,r_1\big),
\qquad & W^{\ssup 1}_1(t)=W_{t^{\ssup1}_1+t},\\
W_2^{\ssup 1}\colon & [0,t_4^{\ssup 1}-t^{\ssup1}_3] \to  {\rm cl}\,\ball\big(S,r_2\big),
\qquad & W^{\ssup 1}_2(t)=W_{t^{\ssup1}_3+t}.\\[1mm]
\end{aligned}$$
Similarly,  we define curves associated with further visits to~$\ball(S,r_1)$. Indeed, for $i\ge 2$,
let $$t_5^{\ssup{i-1}}=\inf\big\{t\ge  t_4^{\ssup{i-1}} \colon W_t\in
\partial\ball\big(S,r_3\big) \big\},$$
and let $W^{\ssup i}_1, W^{\ssup i}_2$ be defined as before, but for the Brownian motion started at 
time~$t_5^{\ssup{i-1}}$. The next lemma states that these curves are almost independent.
\smallskip

\begin{lemma}\label{start}
Let $(X^{\ssup i}_t \colon 0\le t \le \tau^{\ssup i})$, $1\le i \le 2k$, be independent Brownian 
motions started uniformly on  $\partial\ball(S, r_2)$ and stopped upon reaching $\partial\ball(S,r_1)$, if $1\le i\le k$,
and started uniformly on $\partial\ball(S,r_1)$ and stopped upon reaching $\partial\ball(S, r_2)$, if $k<i\le 2k$.
Then the law of this family, and the joint law of the curves
$$\begin{aligned}
\big(W^{\ssup i}_1(t) & \colon 0\le t \le t_2^{\ssup i}-t^{\ssup i}_1\big), \quad i \le k; \\
\big(W^{\ssup i}_2(t) & \colon 0\le t \le t_4^{\ssup i}-t^{\ssup i}_3\big), \quad i \le k;\\[1mm]
\end{aligned}$$
are mutually absolutely continuous with densities bounded by constants, which do not depend on the choice of 
the radii~$r_1, r_2, r_3$, but may depend on the choice of~$\delta$.
\end{lemma}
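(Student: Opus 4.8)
The plan is to cut the Brownian motion at the stopping times $t_1^{\ssup i},\dots,t_5^{\ssup i}$ into the $2k$ pieces $W_1^{\ssup i}, W_2^{\ssup i}$ ($i\le k$) together with the \emph{connecting segments} lying between consecutive pieces, and then to use the strong Markov property to reduce the whole statement to a comparison of the pieces' \emph{starting distributions} with the uniform law on the relevant circles. The first thing I would record is that, conditionally on its starting point, each true piece already has the law of the matching reference motion. Indeed, by the strong Markov property at $t_1^{\ssup i}$ the curve $W_1^{\ssup i}$ is a Brownian motion from $W_{t_1^{\ssup i}}\in\partial\ball(S,r_2)$ run until it first hits $\partial\ball(S,r_1)$ — unconfined, and hence exactly the conditional law of $X^{\ssup i}$, $i\le k$, from the same point — while $W_2^{\ssup i}$ is, by the strong Markov property at $t_3^{\ssup i}$, a Brownian motion from $W_{t_3^{\ssup i}}\in\partial\ball(S,r_1)$ run until it first hits $\partial\ball(S,r_2)$, matching $X^{\ssup{k+i}}$. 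Thus the inward pieces pair with the inward reference motions and the outward pieces with the outward ones, and no time reversal is needed.

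Ordering the pieces in time as $Q_1,\dots,Q_{2k}$ (so $Q_{2i-1}=W_1^{\ssup i}$, $Q_{2i}=W_2^{\ssup i}$) and writing $s_j,e_j$ for the starting and terminal points of $Q_j$, a further use of the strong Markov property — at the end of each piece and at the start of the next — shows that $s_{j+1}$ depends on the past only through $e_j$, via the connecting segment, and that, given $s_{j+1}$, the piece $Q_{j+1}$ is an independent Brownian motion run to its target circle. Hence the joint law $\mathcal L_W$ of $(Q_1,\dots,Q_{2k})$ factorises sequentially as $\nu_0(ds_1)\,P_{s_1}(dQ_1)\prod_{j\ge1}C_j(e_j,ds_{j+1})\,P_{s_{j+1}}(dQ_{j+1})$, where $P_s$ is the law of a Brownian motion from $s$ to its target circle — the \emph{same} kernel that occurs in the reference law $\mathcal L_X=\bigotimes_j\big(\mathrm{Unif}\otimes P_{\,\cdot}\big)$ — while $\nu_0$ and the $C_j$ are the distributions of the successive starting points. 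Since the kernels $P_s$ agree, the chain rule for Radon--Nikodym derivatives cancels them and leaves
$$\frac{d\mathcal L_W}{d\mathcal L_X}(Q_1,\dots,Q_{2k})=\frac{d\nu_0}{d\,\mathrm{Unif}}(s_1)\prod_{j=1}^{2k-1}\frac{dC_j(e_j,\cdot)}{d\,\mathrm{Unif}}(s_{j+1}).$$

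It then remains to bound each factor above and below by a positive constant, and this is where the substance lies. Each factor is a hitting (harmonic-measure) density relative to the uniform law on a circle. For a connecting segment running inside $\ball(S,r_1)$ the relevant source point lies on $S$, hence within distance $r_1/\sqrt2$ of the centre; for a segment reaching out to $\partial\ball(S,r_3)$ and returning, the source lies on $\partial\ball(S,r_3)$ while the target is $\partial\ball(S,r_2)$ with $r_2<\frac23 r_3$; and $\nu_0$ is produced by a motion reaching $\partial\ball(S,r_2)$ from outside $\ball(S,r_3)$. In every case the source is separated from the target circle by a ratio bounded away from $1$ (larger ratios only improving matters), so the Harnack inequality bounds the density between two positive constants depending only on these ratios and not on the absolute sizes of $r_1,r_2,r_3$. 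As there are exactly $2k$ factors and $k$ is fixed, their product is bounded above and below, which is the asserted mutual absolute continuity with bounded densities; the constants depend on the configuration only through $\delta$.

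The hard part will be the bookkeeping that legitimises this sequential Radon--Nikodym computation: one must check carefully that conditioning on a piece's starting point genuinely decouples that piece, with the identical kernel $P_s$, from the coupling that the connecting segments impose through the terminal points $e_j$, so that the factorisation of $\mathcal L_W$ — and hence the exact cancellation against $\mathcal L_X$ — is valid. The remaining, more routine, difficulty is to confirm that the Harnack constants in the harmonic-measure comparisons are truly uniform in the radii, degenerating neither as the radii shrink nor as their mutual ratios grow, so that they depend at most on $\delta$.
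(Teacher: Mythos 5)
Your proposal is correct and follows essentially the same route as the paper's own proof: a strong Markov decomposition of the path at the stopping times $t_j^{\ssup i}$, the observation that each extracted piece, conditionally on its starting point, has exactly the law of the matching reference motion, and a Harnack-principle comparison of each successive entrance distribution with the uniform law on the relevant circle, iterated over the $2k$ pieces. The only difference is presentational --- you package the iteration as an explicit sequential Radon--Nikodym factorisation, whereas the paper argues pairwise and says ``iterating this argument further completes the proof'' --- and your geometric checks (ratios $1/\sqrt2$, $2/3$, and the separation of the starting point from $S_0$ governed by $\delta$) correctly justify the uniformity of the constants in the radii.
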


\begin{proof}
By the Harnack principle, the laws of $W(t^{\ssup 1}_1)$ and $X^{\ssup 1}_0$ are absolutely
continuous with a bounded density. Moreover, conditional on these points, the curves
$W^{\ssup 1}_1$ and $X^{\ssup 1}$ have the same law. Given their endpoints $W(t^{\ssup 1}_2)$ 
and $X^{\ssup 1}_{\tau^{(1)}}$, using the Harnack principle again, the laws of $W(t^{\ssup 1}_3)$ 
and $X^{\ssup{k+1}}_0$,\vspace{-1mm} are absolutely continuous with a bounded density
and, conditional on these points, the curves $W^{\ssup 1}_2$ and $X^{\ssup {k+1}}$ have the same law.
Together with the strong Markov property, this implies that the unconditional
laws of the pairs $(W^{\ssup 1}_1, W^{\ssup 1}_2)$ and $(X^{\ssup 1}, X^{\ssup{k+1}})$ are mutually 
absolutely continuous with bounded densities. Iterating this argument further completes the proof.
\end{proof}

For the lower bounds we need to study configurations of curves, which not only
fail to disconnect, but do not even come close to doing so. To make this precise 
we introduce the notion of an $\alpha$-nice configuration, which is a relaxation 
of the same notion in~\cite{LSW02}.  
\smallskip

\begin{definition}
Suppose that $(\gamma_s^{\ssup 1} \colon 0\le s\le \tau^{\ssup 1}),\ldots, 
(\gamma_s^{\ssup k} \colon 0\le s\le \tau^{\ssup k})$  are planar curves started
on the boundary of a fixed annulus~$A$ and stopped upon reaching the opposite boundary 
circle. This configuration of curves is called \emph{$\alpha$-nice} if 
\begin{itemize}
\item[(i)] $\displaystyle\big\{\gamma^{\ssup i}_s \colon 0\le s \le \tau^{(i)}\} 
\setminus A \subset \ball(\gamma_0^{\ssup i}, \alpha \,|\gamma_0^{\ssup i}|)$, and \\[1mm]
\item[(ii)] the set
$$\bigcup_{i=1}^k \big\{\gamma^{\ssup i}_t \colon 0\le t \le \tau^{\ssup i}\big\}
\cup \bigcup_{i=1}^k \ball\big(\gamma_0^{\ssup i}, \alpha \,|\gamma_0^{\ssup i}| \big) 
\cup \bigcup_{i=1}^k \ball\big(\gamma_{\tau^{(i)}}^{\ssup i}, \alpha  \,|\gamma_{\tau^{(i)}}^{\ssup i}|\big) $$
does not disconnect the centre of the annulus from infinity.
\end{itemize}
Note that condition~(i) is void if the curves are excursions between the bounding
circles of the~annulus.
\end{definition}

As we often argue on an exponential scale, it is convenient 
to introduce the abbreviation $\circle_a$ for  $\partial\ball(0,e^a)$ and to denote by 
$\annulus(a,b)$ the annulus between the circles $\circle_a$ and $\circle_b$.
In several instances we will use that, for a planar Brownian motion started in~$x$ and $0<e^a<|x|<e^b$,
\begin{equation}\label{hitting_rho}
\prob\big( \tau(\circle_a)<\tau(\circle_b) \big) = \frac{b- \log |x|}{b-a},
\end{equation}
see~\cite[Theorem~3.17]{MP08}.
The following key lemma identifies the disconnection probabilities for Brownian excursions. Its proof is 
postponed to Section~\ref{lsw_pf}.
\smallskip

\begin{lemma}\label{lsw}
Fix a positive integer~$k$ and, for $n_1<n_2$, suppose that 
$$(Y^{\ssup i}_t \colon 0\le t \le \tau^{\ssup i}), \quad\mbox{ for } i\in\{1,\ldots, k\},$$ 
are independent Brownian excursions from~$\circle_{n_1}$ to $\circle_{n_2}$. Let 
$p(n_1,n_2,k)$ be the probability that the union of these excursions does not disconnect $\circle_{n_1}$ 
from infinity and $p_\alpha(n_1,n_2,k)$ be the probability that they form an 
$\alpha$-nice configuration. Then there exist constants $C_1, C_2>0$, independent of $n_1, n_2$, 
and an $\alpha_0>0$ such that, for every $\alpha \in [0,\alpha_0]$,
$$C_1 \, (n_2-n_1)^k\, e^{(n_1-n_2)\, \xi(k)}\leq 
{p}_\alpha(n_1,n_2,k)  \leq p(n_1,n_2,k) \leq C_2 \, (n_2-n_1)^k \, e^{(n_1-n_2)\, \xi(k)}.$$
\end{lemma}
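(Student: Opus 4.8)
The plan is to reduce to a normalized annulus and then to upgrade the known full-path disconnection exponent $\xi(k)$ of \eqref{exp_def} into the sharper excursion estimate, the polynomial prefactor accounting for the different behaviour of an excursion near the inner circle. First I would use the conformal --- and in particular scale and rotation --- invariance of the Brownian excursion to reduce to $n_1=0$ and $n_2=m:=n_2-n_1$: since disconnection and $\alpha$-niceness are both preserved under $z\mapsto e^{-n_1}z$, this gives $p(n_1,n_2,k)=p(0,m,k)=:f(m)$ and $p_\alpha(n_1,n_2,k)=:f_\alpha(m)$. The middle inequality $f_\alpha(m)\le f(m)$ is then immediate, since for $\alpha\le\alpha_0$ small an $\alpha$-nice configuration does not disconnect $\circle_0$ from infinity. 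It remains to prove $f(m)\le C_2\,m^k e^{-m\xi(k)}$ and $f_\alpha(m)\ge C_1\,m^k e^{-m\xi(k)}$.

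For the exponential rate I would compare excursions with genuine Brownian motions. If $q(m)$ denotes the probability that $k$ motions started on $\circle_0$ and stopped on $\circle_m$ do not disconnect, then --- up to replacing the origin by the unit disc --- $q(m)\asymp e^{-m\xi(k)}$ is the defining relation \eqref{exp_def}, and the strong Markov property at intermediate circles makes $q$ quasi-multiplicative. A motion and an excursion differ only near the inner circle: splitting a motion at its last visit to $\circle_0$ before reaching $\circle_m$ exhibits it as a cloud of short excursions (``bubbles'') confined to the collar $\annulus(0,1)$, followed by a single excursion from $\circle_0$ to $\circle_m$; in the bulk $\annulus(1,m)$ the two objects are mutually absolutely continuous with bounded densities, exactly as in Lemma~\ref{start}. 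Thus $f(m)$ and $q(m)$ carry the same rate $\xi(k)$, and the whole content of the lemma lies in the additional factor $m^k$.

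The cleanest source of this factor is the hitting estimate \eqref{hitting_rho}. The feature that distinguishes an excursion from a motion is that it crosses the whole annulus from $\circle_0$ to $\circle_m$ without ever returning to $\circle_0$, and by \eqref{hitting_rho} a motion started just outside $\circle_0$ achieves this with probability only $\asymp 1/m$. Realizing the excursions as the $\eps\downarrow0$ limit of motions started on $\circle_\eps$ and conditioned on this crossing event, the $k$ conditioning probabilities $\asymp(\eps/m)^k$ appear in the denominator and, after normalization, promote the non-disconnection estimate to order $m^k e^{-m\xi(k)}$. This conditioning is a \emph{global} feature tied to the full modulus $m$ and cannot be recovered by multiplying sub-annulus estimates; in particular $f$ is \emph{not} quasi-multiplicative, since $m^k$ does not factor as $m_1^k m_2^k$. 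This is exactly the point separating the excursion estimate from the full-path exponent, and it must be handled at the inner boundary rather than distributed across the bulk. The upper bound $f(m)\le C_2\,m^k e^{-m\xi(k)}$ then follows from this decomposition, combining the quasi-multiplicative full-path bound $e^{-m\xi(k)}$ in the bulk with the $m^k$ coming from \eqref{hitting_rho}.

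The lower bound $f_\alpha(m)\ge C_1\,m^k e^{-m\xi(k)}$ is the main obstacle, and is what the relaxed notion of $\alpha$-nice configuration is built for. I would construct the favorable event so that the rate $e^{-m\xi(k)}$ and the polynomial $m^k$ arise from separate, essentially independent mechanisms: the $k$ excursions perform the global crossing without returning to $\circle_0$ (supplying the $m^k$ through \eqref{hitting_rho}), while across the bulk they form an $\alpha$-nice, hence non-disconnecting, configuration (supplying $e^{-m\xi(k)}$). The tool for the second mechanism is a \emph{separation lemma} in the spirit of \cite{LSW02}: conditionally on an $\alpha$-nice crossing of a unit sub-annulus, the excursions, the balls $\ball(\gamma_0^{\ssup{i}},\alpha\,|\gamma_0^{\ssup{i}}|)$ around their endpoints, and the channel to infinity stay well separated with probability bounded below uniformly in $m$, so that by the strong Markov property an $\alpha$-nice crossing of $\annulus(j,j+1)$ can be glued onto one of $\annulus(j+1,j+2)$ while keeping the union from disconnecting; the slack permitted by condition~(i) together with these balls provides exactly the room to glue. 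The delicate points are the uniformity in $m$ and in the starting configuration of the separation estimate, and the bookkeeping that keeps the global inner-boundary conditioning and the bulk concatenation from interfering, so that the polynomial power which emerges is exactly $k$.
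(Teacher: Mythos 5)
Your overall architecture --- scaling reduction, the trivial middle inequality, locating the entire difficulty in the polynomial prefactor $m^k$, and the correct observation that the bare probability $p$ is not quasi-multiplicative --- matches the real structure of the problem. But the mechanism you offer for producing the factor $m^k$ does not work as stated, and it is exactly at the point you yourself flag as ``the whole content of the lemma.'' You realize the excursions as motions started on $\circle_\eps$ conditioned to reach $\circle_m$ before $\circle_0$, note that the conditioning probability is $\asymp(\eps/m)^k$ by \eqref{hitting_rho}, and conclude that dividing by it ``promotes'' the non-disconnection estimate from $e^{-m\xi(k)}$ to $m^k e^{-m\xi(k)}$. For that to follow you need the \emph{joint} probability of (crossing without return) $\cap$ (not disconnecting) to be $\asymp \eps^k e^{-m\xi(k)}$, i.e.\ larger than the product of the marginals by precisely $m^k$. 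That positive correlation of the right polynomial order is equivalent to the statement being proved, not a step toward it; nothing in your argument pins the power down to $k$ rather than, say, $0$ (which is what independence would give) or anything in between. The same issue infects your lower bound: the separation/gluing construction yields the exponential rate, but the ``bookkeeping that keeps the global inner-boundary conditioning and the bulk concatenation from interfering, so that the polynomial power which emerges is exactly $k$'' is named rather than carried out, and it is the hard part.

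The paper does not attempt to rederive this. It quotes Theorem~3.1 of \cite{LSW02}, which gives the sharp two-sided estimate $\E[e^{-\lambda L_n^k}]\asymp n^k e^{-n\xi(k,\lambda)}$ for the Laplace transform of the extremal distance $L_n^k$ across the annulus --- a quantity which, unlike the bare probability, admits an up-to-constants multiplicative analysis and for which \cite{LSW02} extracts the polynomial factor. The non-disconnection probability is then recovered by monotone convergence as $\lambda\downarrow 0$, since $\{L_n^k<\infty\}$ is exactly non-disconnection; and the $\alpha$-nice lower bound is obtained not by a fresh gluing construction but by adapting Lemma~4.1 of \cite{LSW02}, which bounds $\E[e^{-\lambda L_n(j)}]-\E[e^{-\lambda L_n(j)}\one_{\alpha\text{-nice}}]$ by $\eps\,\E[e^{-\lambda L_{n-2}^k}]$ and absorbs it into the main term. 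If you want a self-contained route along your lines you would essentially have to reprove that portion of \cite{LSW02}; otherwise the efficient fix is to replace your conditioning heuristic by a citation of their Theorem~3.1 and Lemma~4.1 and supply only the $\lambda\downarrow 0$ limit argument.
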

\smallskip

The following result is the main tool from this section. It is derived from Lemma~\ref{lsw} by
extracting suitable excursions from the curves.
\medskip

\begin{lemma}\label{lowandup}
Fix integers~$0\le\ell\le k$ and, for $n_1<n_2$, suppose that 
$$(X^{\ssup i}_t \colon 0\le t \le \tau^{\ssup i}), \quad\mbox{ for } i\in\{1,\ldots, k\},$$ 
are independent Brownian motions, which in the case~$1\le i\le \ell$ are started 
uniformly in~$\circle_{n_1}$ and stopped upon reaching~$\circle_{n_2}$, and in the 
case~$\ell< i\le k$ are started uniformly in~$\circle_{n_2}$  
and stopped upon reaching~$\circle_{n_1}$. Let $q(n_1,n_2,k)$ be the probability that the union of the
$k$~paths does not disconnect $\circle_{n_1}$ from infinity, and $q_\alpha(n_1,n_2,k)$ be the probability 
that the paths form an $\alpha$-nice configuration. Then there exist constants $C_3, C_4>0$, independent of $n_1, n_2$, 
and an $\alpha_0>0$ such that, for every $\alpha \in [0,\alpha_0/2]$,
$$C_3 \, \alpha^k \, e^{(n_1-n_2)\, \xi(k)}
\leq {q}_\alpha(n_1,n_2,k)  \leq q(n_1,n_2,k) \leq C_4 \, e^{(n_1-n_2)\, \xi(k)}.$$
\end{lemma}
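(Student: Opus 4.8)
The plan is to deduce Lemma~\ref{lowandup} from Lemma~\ref{lsw} by relating full Brownian paths (started uniformly on one circle, stopped on the opposite one) to the excursions treated in Lemma~\ref{lsw}. The upper bound should follow by \emph{restriction}: a full path contains an excursion between the two circles as a sub-curve, so non-disconnection of the whole configuration is a sub-event of non-disconnection of the extracted excursions. The lower bound should follow by \emph{construction}: I force each path to behave like a nice excursion and add a harmless initial and terminal segment, paying a bounded cost. The factor $\alpha^k$ on the left reflects the cost of forcing $k$ paths to begin (and end) in prescribed small windows, while the absence of the polynomial factor $(n_2-n_1)^k$ on the right compared with Lemma~\ref{lsw} is because a full path, unlike an excursion, need not traverse the whole annulus monotonically and so disconnection is only easier, giving the cleaner exponential bound.

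\textbf{Upper bound.} First I would extract from each Brownian path $X^{\ssup i}$ its \emph{last} excursion across the annulus $\annulus(n_1,n_2)$: for $i\le\ell$ (started on the inner circle) this is the final crossing from $\circle_{n_1}$ to $\circle_{n_2}$ before the stopping time, and symmetrically for $i>\ell$. By the time-reversal property of Brownian excursions quoted before Lemma~\ref{lsw}, each such extracted piece has the law of a Brownian excursion between the two circles (after reversing those started on the outer circle). Since the union of the full paths contains the union of these excursions, the event that the paths do not disconnect $\circle_{n_1}$ from infinity implies the event that the excursions do not disconnect it. Hence $q(n_1,n_2,k)\le p(n_1,n_2,k)\le C_2\,(n_2-n_1)^k\,e^{(n_1-n_2)\xi(k)}$ by Lemma~\ref{lsw}. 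To remove the polynomial factor and get the clean bound $C_4\,e^{(n_1-n_2)\xi(k)}$, I would use the strong Markov property to decompose each path into its successive crossings of the annulus: a path started uniformly on $\circle_{n_1}$ makes a geometric number of returns, and conditioning on the number of crossings and summing the corresponding excursion-disconnection bounds absorbs the $(n_2-n_1)^k$ factor into the constant, since each additional crossing only lowers the disconnection probability.

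\textbf{Lower bound.} For the lower bound I would build an explicit $\alpha$-nice configuration. I condition each path $X^{\ssup i}$ to start in a small arc of $\circle_{n_1}$ (resp.\ $\circle_{n_2}$) of angular width of order $\alpha$; this costs a factor of order $\alpha$ per path, hence $\alpha^k$ altogether. On this event I then require the initial segment near the starting circle to stay inside the ball $\ball(\gamma_0^{\ssup i},\alpha|\gamma_0^{\ssup i}|)$ required by condition~(i) of $\alpha$-niceness, which by the Markov property and a standard Brownian small-ball estimate again costs only a bounded factor. The middle portion is an excursion across the annulus, to which I apply the lower bound $p_\alpha(n_1,n_2,k)\ge C_1\,(n_2-n_1)^k\,e^{(n_1-n_2)\xi(k)}$ of Lemma~\ref{lsw}; the extra polynomial factor here is more than enough to compensate for the bounded conditioning costs, leaving $C_3\,\alpha^k\,e^{(n_1-n_2)\xi(k)}$.

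\textbf{Main obstacle.} The delicate point is the upper-bound extraction: I must verify that the extracted sub-curve of a full Brownian path genuinely has the law of a Brownian excursion so that Lemma~\ref{lsw} applies, and that controlling the number of annulus crossings really does absorb the polynomial factor rather than merely bounding it by a constant times $(n_2-n_1)^k$. This requires care because a full path may oscillate across the annulus many times, and each crossing contributes additional curve segments that make disconnection more likely; the estimate must be arranged so that the sum over the number of crossings converges. Managing this summation --- essentially showing that the disconnection probability decays geometrically in the number of independent crossings --- is where I expect the technical work to concentrate.
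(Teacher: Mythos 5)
Your overall architecture coincides with the paper's: extract excursions from the full paths, apply Lemma~\ref{lsw}, and for the upper bound decompose each path into a geometric number of near-crossings. Two remarks on the upper bound: you need the Harnack-principle step (as in Lemma~\ref{start}) to pass from the extracted excursions to genuinely independent, uniformly started excursions at the cost of a bounded density; and the cancellation of the polynomial factor $(n_2-n_1)^k$ comes specifically from the \emph{success probability} of the geometric variable being $\asymp 1/(n_2-n_1)$ by \eqref{hitting_rho} (one such factor per path, sitting inside $\prob(N^{\ssup i}=\ell_i)$), not merely from the convergence of the sum over the number of crossings. Your phrasing that the summation ``absorbs the factor into the constant'' leaves this key quantitative input implicit, but the mechanism you describe is the paper's.

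The lower bound, however, contains a genuine error in the cost accounting. You attribute the factor $\alpha^k$ to restricting each starting point to an arc of angular width of order $\alpha$ (an unnecessary conditioning, since the starting points are already uniform and no arc restriction is needed), and you then claim that confining the initial segment of each path to the ball $\ball(X_0^{\ssup i},\alpha\,|X_0^{\ssup i}|)$ costs only a bounded factor. It does not. The event that the path, up to its last visit to its starting circle, stays in that ball is equivalent to the path reaching the opposite circle before returning to the starting circle once it has first left the small ball, and by \eqref{hitting_rho} this has probability of order $\alpha/(n_2-n_1)$ per path; this single event is the source of \emph{both} the $\alpha^k$ and the factor $(n_2-n_1)^{-k}$ that cancels the polynomial in $p_{2\alpha}(n_1,n_2,k)$ from Lemma~\ref{lsw}. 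With your accounting the lower bound would come out as $C\,\alpha^k\,(n_2-n_1)^k\,e^{(n_1-n_2)\xi(k)}$, which for $n_2-n_1$ large contradicts the upper bound $C_4\,e^{(n_1-n_2)\xi(k)}$ you have just established --- that internal inconsistency is the signal that a factor $1/(n_2-n_1)$ per path has been lost. The paper's proof does exactly this bookkeeping: it uses the last-exit decomposition at radius $|X_0^{\ssup i}|$, shows the pre-excursion confinement event has probability bounded below by a constant multiple of $(\alpha/(n_2-n_1))^k$, and notes that conditionally on it the remaining pieces are independent Brownian excursions to which $p_{2\alpha}$ applies.
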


\begin{proof}
We start with the upper bound. Let $\sigma^{\ssup i}_0=0$ and, for $j\ge 1$, if $1\le i\le \ell$ define stopping times 
$$\begin{aligned}
\tau^{\ssup i}_j & =\inf\big\{t>\sigma^{\ssup i}_{j-1} \colon X^{\ssup i}_t\in\circle_{n_1-1}\cup\circle_{n_2}\big\},\quad
\sigma^{\ssup i}_j & =\inf\big\{t>\tau^{\ssup i}_{j} \colon X^{\ssup i}_t\in\circle_{n_1}\big\},
\end{aligned}$$
and similarly, if $\ell<i\le k$,
$$\begin{aligned}
\tau^{\ssup i}_j & =\inf\big\{t>\sigma^{\ssup i}_{j-1} \colon X^{\ssup i}_t\in\circle_{n_1}\cup\circle_{n_2+1}\big\}, \quad
\sigma^{\ssup i}_j & =\inf\big\{t>\tau^{\ssup i}_{j} \colon X^{\ssup i}_t\in\circle_{n_2}\big\}.
\end{aligned}$$
By~\eqref{hitting_rho} the random variables $N^{\ssup i}$, defined by $\tau^{\ssup i}_{N^{\ssup i}}=\tau^{\ssup i}$, are
geometric with success probability~$1/(n_2-n_1+1)$. Define the paths
$$X^{\ssup i}_j \colon [0, \sigma^{\ssup i}_j-\tau^{\ssup i}_j] \to \R^2, \qquad X^{\ssup i}_j(t)=X^{\ssup i}_{\tau^{(i)}_j+t}.$$
In particular, the paths $X^{\ssup i}_{N^{\ssup i}}$ contain an excursion from $\circle_{n_1}$ to $\circle_{n_2}$, if $1\le i\le \ell$,
or from $\circle_{n_2}$ to $\circle_{n_1}$, if $\ell<i\le k$. Using this, together with the strong Markov property, the Harnack principle,
and the time-reversibility of excursions, we obtain, for a suitable constant~$C>0$, 
$$\begin{aligned}
q(n_1,n_2,k) & \leq C\, p(n_1,n_2,k) \sum_{\ell_1,\ldots,\ell_k=1}^\infty 
\prod_{i=1}^k \prob\big( N^{\ssup i}=\ell_i\big) \\ & \qquad \times\prod_{j=1}^{\ell_i-1} \prob\big( X^{\ssup i}_j 
\mbox{ does not disconnect $\circle_{n_1}$ from infinity } \big| \tau^{\ssup i}_j<\tau^{\ssup i} \big).
\end{aligned}$$
As the factors in the second line are bounded from above by a constant $\rho<1$, we obtain
$$\begin{aligned}
q(n_1,n_2,k) & \leq C\, p(n_1,n_2,k) \, \frac1{(n_2-n_1+1)^k} \, \sum_{\ell_1=1}^\infty \rho^{\ell_1-1} \ldots \sum_{\ell_k=1}^\infty \rho^{\ell_k-1}\\
& \le C \, C_2 \,(1-\rho)^{-k} \, e^{(n_1-n_2)\, \xi(k)}.
\end{aligned}$$ 
For the lower bound we consider last exit times~$\sigma^{\ssup i}$ defined by
$$\sigma^{\ssup i} =\sup\big\{t<\tau^{\ssup i} \colon |X^{\ssup i}_t|= |X^{\ssup i}_0| \big\},$$
Observe that the intersection of the events
\begin{itemize}
\item[(1)] $\big\{X^{\ssup i}_t \colon 0\le t\le \sigma^{\ssup i}\big\} \subset 
\ball\big(X^{\ssup i}_0,\alpha\, |X^{\ssup i}_0|\big)$ 
           for all $1\le i \le k$,\\[-2mm]
\item[(2)] the set $$\bigcup_{i=1}^k \big\{X^{\ssup i}_t \colon \sigma^{\ssup i}\le t \le \tau^{\ssup i}\big\}
\cup \bigcup_{i=1}^k \ball\big(X_{\sigma^{\ssup i}}^{\ssup i}, 2\alpha \,|X_0^{\ssup i}| \big) 
\cup \bigcup_{i=1}^k \ball\big(X_{\tau^{(i)}}^{\ssup i}, 2\alpha  \,|X_{\tau^{(i)}}^{\ssup i}|\big) $$
does not disconnect $\circle_{n_1}$ from infinity,
\end{itemize}
imply that the configuration is $\alpha$-nice. By~\eqref{hitting_rho} the probability of~(1) is 
bounded from below by a constant multiple of $(\alpha/(n_2-n_1))^k$. Conditional on~(1) the paths
$\{X^{\ssup i}_t \colon \sigma^{\ssup i}\le t \le \tau^{\ssup i}\}$ are independent Brownian excursions and hence the probability of~(2) is bounded from below by a constant multiple of $p_{2\alpha}(n_1, n_2, k)$. 
Combining these two estimates and using Lemma~\ref{lsw} implies the result.
\end{proof}
\smallskip

\begin{remark} Lemma~\ref{lowandup} implies that in the definition of Brownian disconnection exponents 
we can allow that any of the paths, instead of starting in $\partial\ball(0,1)$ and being stopped on 
leaving $\ball(0,e^n)$, start on~$\partial\ball(0,e^n)$ and are stopped on hitting~$\ball(0,1)$. 
Observe also that Theorem~\ref{exp} is an immediate consequence of Lemma~\ref{lowandup}.
\end{remark}

\subsection{Proof of Lemma~\ref{main}.}
We now complete the proof of Lemma~\ref{main} using the framework provided in
the previous section. We start with the easiest part.

\begin{lemma}
There exists a constant $c_2>0$ such that, for any $n\ge N(\delta)$, and any dyadic subsquare 
$S\subset S_0$ of sidelength $2^{-n}$, we have
$$\prob\big(S \in {\mathcal S}_n\big)  \leq c_2 \,  2^{-\xi(4)n}.$$
\end{lemma}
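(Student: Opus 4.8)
The plan is to exploit the four-arm structure at a double point: on the event $\{S\in\mathcal S_n\}$ I would exhibit four almost independent crossings of an annulus around~$S$ whose union fails to disconnect $\ball(S,2^{-n})$ from infinity, and then quote Lemma~\ref{lowandup} with $k=4$. First I would fix $r_1:=2^{-n}$ together with two radii $r_2,r_3$ that are fixed multiples of~$\delta$, say $r_3=\delta/2$ and $r_2=\delta/4$; since $2^{-n}<\delta/16$ for $n\ge N(\delta)$ these satisfy $r_1<r_2<\frac23 r_3<r_3<\delta$, exactly as required in the construction preceding Lemma~\ref{start}, and I would consider the curves $W^{\ssup 1}_1,W^{\ssup 1}_2,W^{\ssup 2}_1,W^{\ssup 2}_2$ embedded in the Brownian motion there.

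Next I would check that property~(1) of a $\delta$-good square forces all four curves to be defined. Property~(1) produces a first visit to~$S$, then an excursion reaching $\partial\ball(S,\delta-\frac1{\sqrt2}\,2^{-n})$, whose radius exceeds $r_3$, and finally a second visit to~$S$. This excursion forces the motion, after the first visit, to reach $\partial\ball(S,r_2)$ and to leave $\ball(S,r_3)$, so that $t_5^{\ssup 1}$ occurs and the second visit begins afresh from $\partial\ball(S,r_3)$; and since the motion must eventually exit the disc of radius~$R$, it also leaves $\ball(S,r_2)$ after the second visit, so that $W^{\ssup 2}_2$ is defined. Moreover property~(2) states that the whole path, hence a fortiori the union of these four curves, does not disconnect $\ball(S,r_1)=\ball(S,2^{-n})$ from infinity. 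Therefore $\{S\in\mathcal S_n\}$ is contained in the event that the four curves are defined and do not disconnect $\ball(S,r_1)$ from infinity.

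I would then decouple the four curves. Each $W^{\ssup i}_1$ is an inward crossing from $\partial\ball(S,r_2)$ to $\partial\ball(S,r_1)$ and each $W^{\ssup i}_2$ an outward crossing from $\partial\ball(S,r_1)$ to $\partial\ball(S,r_2)$, so Lemma~\ref{start} (with $k=2$) shows that, on the event they are defined, their joint law is absolutely continuous, with density bounded by a constant depending only on~$\delta$, with respect to the law of four independent Brownian motions: two started uniformly on $\partial\ball(S,r_2)$ and stopped on $\partial\ball(S,r_1)$, and two started uniformly on $\partial\ball(S,r_1)$ and stopped on $\partial\ball(S,r_2)$. Since non-disconnection of $\ball(S,r_1)$ is a measurable functional of the four curves, the probability in question is at most a constant multiple of $q(\log r_1,\log r_2,4)$, the non-disconnection probability for this independent family, where I use translation invariance to centre the annulus at the barycentre of~$S$.

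Finally I would apply Lemma~\ref{lowandup} with $k=4$, $\ell=2$, $n_1=\log r_1$ and $n_2=\log r_2$ to get $q(\log r_1,\log r_2,4)\le C_4\,e^{(\log r_1-\log r_2)\,\xi(4)}=C_4\,(r_1/r_2)^{\xi(4)}$. As $r_2$ is a fixed multiple of~$\delta$, this equals $C_4\,r_2^{-\xi(4)}\,2^{-\xi(4)n}$, and absorbing $r_2^{-\xi(4)}$, $C_4$ and the density bound into a single constant $c_2$ (allowed to depend on~$\delta$) yields $\prob(S\in\mathcal S_n)\le c_2\,2^{-\xi(4)n}$. The hard part will be the bookkeeping of the middle two paragraphs: confirming that property~(1), together with the eventual exit from the disc of radius~$R$, really makes all four curves exist simultaneously so that Lemma~\ref{start} is applicable, and that non-disconnection by the entire path does imply non-disconnection by the four-curve subfamily in the sense used by Lemma~\ref{lowandup}. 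The remaining steps are direct applications of the two lemmas and an elementary computation.
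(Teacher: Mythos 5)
Your proposal is correct and follows essentially the same route as the paper: fix $r_1=2^{-n}$ and $r_2,r_3$ of order $\delta$ (the paper takes $r_2=\delta/2$, $r_3=\delta-2^{-n-\frac12}$ so that $r_3$ matches the radius in property~(1), but your choice works equally well since $2^{-n}<\delta/16$), observe that $\{S\in\mathcal S_n\}$ forces the four embedded curves $W^{\ssup 1}_1,W^{\ssup 1}_2,W^{\ssup 2}_1,W^{\ssup 2}_2$ to exist and not disconnect $\ball(S,2^{-n})$ from infinity, and combine Lemma~\ref{start} with Lemma~\ref{lowandup} for $\ell=2$, $k=4$. Your write-up in fact supplies the bookkeeping that the paper's two-line proof leaves implicit.
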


\begin{proof}
The event $\{S\in {\mathcal S}_n\}$ implies that, for $r_1=2^{-n}$, $r_2=\frac\delta2$ and $r_3=\delta-2^{-n-\frac12}$,
the embedded paths $W^{\ssup 1}_1, W^{\ssup 1}_2, W^{\ssup 2}_1, W^{\ssup 2}_2$ do not disconnect the disc~$\ball(S,2^{-n})$
from infinity. Combining Lemma~\ref{start} and Lemma~\ref{lowandup}, for $\ell=2$ and $k=4$, gives the result.
\end{proof}

The idea of the corresponding lower bound is to describe a behaviour of Brownian motion, 
which implies the event $\{S\in {\mathcal S}_n\}$, such that all the significant probabilistic cost 
arises from making $W^{\ssup 1}_1, W^{\ssup 1}_2, W^{\ssup 2}_1, 
W^{\ssup 2}_2$ a configuration of $\alpha$-nice curves.
\medskip

\begin{lemma}\label{m=1lb}
There exists a constant $c_1>0$ such that, for any $n\ge N(\delta)$, and any dyadic subsquare 
$S\subset S_0$ of sidelength $2^{-n}$, we have
$$c_1 \,  2^{-\xi(4)n} \leq \prob\big(S \in {\mathcal S}_n\big). $$
\end{lemma}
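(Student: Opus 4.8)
The plan is to construct an explicit sub-event $\mathcal E \subset \{S \in \mathcal S_n\}$, built as an intersection of conditionally independent pieces and arranged so that all the probabilistic cost of order $2^{-\xi(4)n}$ is concentrated in a single event: that the four annulus crossings $W^{\ssup 1}_1, W^{\ssup 1}_2, W^{\ssup 2}_1, W^{\ssup 2}_2$ form an $\alpha$-nice configuration in the annulus $\annulus$ around $S$ with inner radius $r_1 = 2^{-n}$ and outer radius $r_2 = \delta/2$, while every other ingredient has probability bounded below by a constant depending only on $\delta, R$ and a fixed small $\alpha$. The point of insisting on $\alpha$-niceness, rather than on bare non-disconnection, is that the $\alpha$-margin lets me splice the microscopic channel left free inside $\annulus$ onto a fixed macroscopic channel to the boundary, absorbing the remaining parts of the path at negligible cost.

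Concretely, I would prescribe the following behaviour, reading the stopping-time decomposition of the previous subsection with $r_1=2^{-n}$, $r_2=\delta/2$, $r_3=\delta-\tfrac1{\sqrt2}2^{-n}$ (so that $\circle_{r_3}=\partial\ball(S,\delta-\tfrac1{\sqrt2}2^{-n})$ is exactly the sphere in condition~(1)). First the path runs from its start to $\partial\ball(S,r_2)$ without leaving $\ball(0,R)$; then it performs the first visit ($W^{\ssup 1}_1$ crossing inward to $\circle_{r_1}$, a hit of the square $S$ inside $\ball(S,r_1)$, and $W^{\ssup 1}_2$ crossing back out to $\circle_{r_2}$); then it travels out to $\circle_{r_3}$ and returns to $\partial\ball(S,r_2)$; then it performs the second visit; and finally it wanders to the killing time~$\tau$. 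This forces condition~(1) by construction. The four crossings are required to be $\alpha$-nice; by Lemma~\ref{start} their joint law is comparable, with densities bounded in both directions, to four independent Brownian motions crossing $\annulus$ (two inward, two outward), so Lemma~\ref{lowandup} with $k=4$, $\ell=2$ gives $\prob(\text{$\alpha$-nice}) \ge C_3\,\alpha^4\,(r_1/r_2)^{\xi(4)} \ge c\,2^{-\xi(4)n}$, with $\alpha$ now fixed. The hit of $S$ inside $\ball(S,r_1)$ is a scale-invariant event of constant probability and, occurring strictly inside $\ball(S,r_1)$, is irrelevant to disconnection. The initial approach, the travel excursion to $\circle_{r_3}$ and back, and the final wandering are all events at the macroscopic scales $r_2,r_3,R$, hence of probability bounded below by constants independent of $n$.

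The geometric heart of the argument is to check that this configuration really forces condition~(2), i.e. that the full path does not disconnect $\ball(S,2^{-n})$ from infinity. To this end I would fix in advance a corridor $G$, a tube of width of order $\delta$ joining a prescribed arc $I\subset\circle_{r_2}$ to $\partial\ball(0,R)$, and require all the macroscopic pieces to avoid $G$; confining Brownian motion away from a fixed corridor while performing each fixed-scale task costs only a constant. Two facts then combine: because $G$ occupies a full radial sector out to $\partial\ball(0,R)$, no piece avoiding $G$ can encircle $S$ at any radius $\ge r_2$, so no disconnection occurs outside $\ball(S,r_2)$; and inside $\annulus$ the $\alpha$-niceness leaves a free channel from $\circle_{r_1}$ across to $\circle_{r_2}$, which I arrange---by prescribing, in the lower-bound construction of Lemma~\ref{lowandup}, that the free channel exits through the arc $I$---to open precisely at the mouth of $G$. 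Splicing the inner channel to $G$ exhibits a crossing of $\ball(0,R)\setminus\ball(S,r_1)$ in the complement of the whole path, which is condition~(2); the parts of the path inside $\ball(S,r_1)$ play no role.

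Assembling the pieces by the strong Markov property at the successive stopping times, and using Lemma~\ref{start} for the comparison of the four crossings, the probability of $\mathcal E$ factorises into the single expensive factor $\ge c\,2^{-\xi(4)n}$ times finitely many constant factors, yielding $\prob(S\in\mathcal S_n)\ge\prob(\mathcal E)\ge c_1\,2^{-\xi(4)n}$. I expect the main obstacle to be exactly the topological bookkeeping of the preceding paragraph: making rigorous that an $\alpha$-nice inner channel together with connecting pieces confined away from $G$ implies non-disconnection at scale $2^{-n}$, in particular matching the location of the abstract free channel to the fixed corridor and verifying that the $\alpha$-collars in the definition of $\alpha$-niceness give enough room to perform the splice. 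The probabilistic estimates, by contrast, are immediate from Lemmas~\ref{start} and~\ref{lowandup} once the scenario is fixed.
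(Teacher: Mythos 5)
Your proposal is correct and follows essentially the same route as the paper: the same radii $r_1=2^{-n}$, $r_2=\delta/2$, $r_3=\delta-2^{-n-\frac12}$, the four crossings $W^{\ssup 1}_1, W^{\ssup 1}_2, W^{\ssup 2}_1, W^{\ssup 2}_2$ made $\alpha$-nice via Lemmas~\ref{start} and~\ref{lowandup} carrying the entire cost $2^{-\xi(4)n}$, and the remaining macroscopic pieces confined at constant cost so that the free channel of the nice configuration connects to infinity. The only divergence is in how the splice is organised: rather than prescribing in advance the arc of $\partial\ball(S,\delta/2)$ through which the free channel exits (a location which Lemma~\ref{lowandup} as stated does not control, though it could be arranged by rotational symmetry), the paper lets the channel terminate at a random point $x\in\partial\ball(S,\frac\delta2)$ determined by the realised configuration and then requires, at constant conditional cost, that the intermediate excursion to $\partial\ball(S,r_3)$ and the initial and final stretches (confined to fixed strips) avoid disconnecting this $x$ from infinity.
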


\begin{figure}[ht]
  \centerline{ \hbox{ \psfig{file=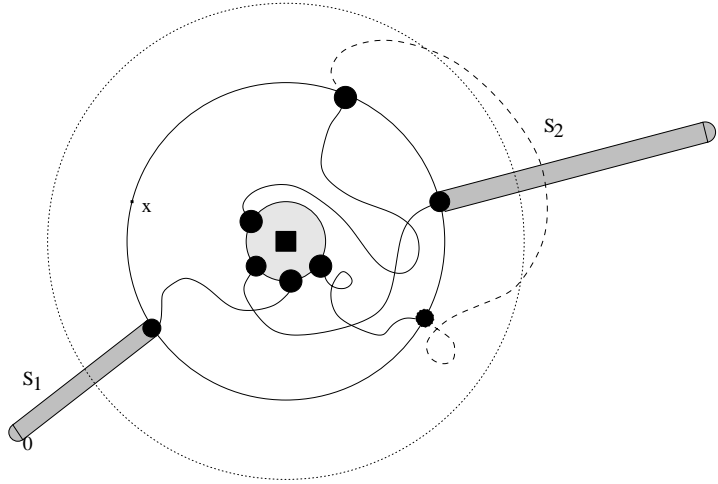,height=2.5in}}}
  \caption{Illustration of the strategy for the Brownian path explained in Lemma~\ref{m=1lb}. The three circles 
  around the solid square~$S$ have radii $r_1<r_2<r_3$. The indicated configuration of curves is $\alpha$-nice as the      
  shaded disc is not disconnected from infinity by the union of the paths and the small solid discs. The initial and       
  final parts of the path have to remain in the shaded strips, and the dashed path does not disconnect the point $x$ from infinity. }
  \label{lower}
\end{figure}

\begin{proof}
We keep the choice of $r_1=2^{-n}$, $r_2=\frac\delta2$ and $r_3=\delta-2^{-n-\frac12}$ as in the proof of the upper bound,
and fix~$0<\alpha<\alpha_0$. Define two strips ${\mathsf S}_1$ 
and ${\mathsf S}_2$ as the set of all points of distance at most~$\alpha\frac\delta2$ to the straight line 
\begin{itemize}
\item connecting the origin with the nearest point in $\partial\ball(S,\frac\delta2)$, respectively,
\item connecting $W_{t^{\ssup 2}_4}$ with the nearest point in $\partial\ball(S,2R)$.
\end{itemize}
Now look at the five events
\begin{itemize}
\item[(1)] the path $\{W_s \colon 0\le s \le t^{\ssup 1}_1\}$ remains in the strip~${\mathsf S}_1$,\\[-3mm]
\item[(2)] the path $\{W_s \colon t^{\ssup 1}_2\le s \le t^{\ssup 1}_3\}$ remains in the set
      $\ball(S,2^{-n}) \cup \ball(W_{t^{\ssup 1}_2}, \alpha 2^{-n})$,
\item[(3)] the path $\{W_s \colon t^{\ssup 2}_2\le s \le t^{\ssup 2}_3\}$ remains in the set
      $\ball(S,2^{-n}) \cup \ball(W_{t^{\ssup 2}_2}, \alpha 2^{-n})$,
\item[(4)] the path $\{W_s \colon t^{\ssup 2}_4\le s \le \tau\}$ remains in the strip~${\mathsf S}_2$,\\[-3mm]
\item[(5)] the four curves $W^{\ssup 1}_1, W^{\ssup 1}_2, W^{\ssup 2}_1, 
W^{\ssup 2}_2$ are an $\alpha$-nice configuration.
\end{itemize}
By the strong Markov property, Lemma~\ref{start} and Lemma~\ref{lowandup} the probability of the intersection of 
these five events is bounded from below by a constant multiple of $2^{-n\xi(4)}$. Given curve segments
$(W_s \colon s\in [0, t^{\ssup 1}_4] \cup [t^{\ssup 2}_1, \tau])$ satisfying these events, we may
identify a point $x\in\partial\ball(S,\frac\delta2)$ which is not disconnected from~$\ball(S,2^{-n})$ 
by the set 
$$\bigcup_{i,j=1}^2 \big\{W_t \colon t^{\ssup i}_{2j-1} \le t \le t^{\ssup i}_{2j} \big\}
\cup \bigcup_{i=1}^2 \bigcup_{j=1}^4  \ball\big(W_{t^{\ssup i}_j}, \alpha \,|W_{t^{\ssup i}_j}| \big). $$
We then additionally require
\begin{itemize}
\item[(6)] the path $\{W_s \colon t^{\ssup 1}_4\le s \le t^{\ssup 2}_1\}$ stays in
$\ball(t^{\ssup 1}_4, \alpha\frac\delta2)\cup (\R^2\setminus \ball(S,\frac\delta2))$
and also does not disconnect the point~$x$ from infinity.
\end{itemize}
Observe with the help of Figure~\ref{lower}, that under the intersection of these six events we have
$\{S\in{\mathcal S}_n\}$.  The conditional probability of the sixth event is bounded from zero, with
a bound depending on the choice of~$\delta$ and~$\alpha$. This proves the lower bound of the lemma.
\end{proof}
\medskip

In the last part we derive the second moment estimate in Lemma~\ref{main} by looking at the
path at two scales, roughly speaking, the size and the distance of the two squares~$S$, $T$.
\smallskip

\begin{lemma}
There exists a constant $c_3>0$ such that for any $n\ge m \ge N(\delta)$ and any
pair of dyadic subsquares $S,T\subset S_0$ of sidelength $2^{-n}$
with distance in $[\frac 12 2^{-m},2^{-m}]$,  we have
$$\prob\big(S,T \in {\mathcal S}_n\big) \leq c_3 \, \cdot 2^{-m\xi(4)} \cdot 2^{-2(n-m)\xi(4)}.$$
\end{lemma}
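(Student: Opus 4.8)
The plan is to look at the path at three disjoint annular scales and to bound $\prob(S,T\in\mathcal S_n)$ by a product of three four-path disconnection probabilities, one per scale. Let $z$ be the midpoint of the barycentres of $S$ and $T$ and fix radii $\rho_1=2^{-n}$, a radius $\rho_2$ of order $2^{-m}$ small enough that $\ball(S,\rho_2)$ and $\ball(T,\rho_2)$ are disjoint (possible since $\dist(S,T)\ge\frac12 2^{-m}$), a radius $\rho_3$ of order $2^{-m}$ large enough that $D:=\ball(z,\rho_3)$ contains $\ball(S,\rho_2)\cup\ball(T,\rho_2)$, and $\rho_4$ of order $\delta$. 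The two fine annuli ${\rm cl}\,\ball(S,\rho_2)\setminus\ball(S,\rho_1)$ and ${\rm cl}\,\ball(T,\rho_2)\setminus\ball(T,\rho_1)$ and the coarse annulus ${\rm cl}\,\ball(z,\rho_4)\setminus D$ are then pairwise disjoint, separated by the buffer region $D\setminus(\ball(S,\rho_2)\cup\ball(T,\rho_2))$, whose relevant radii differ by a fixed factor. (The construction requires $n-m$ to exceed a fixed constant; for the finitely many smaller values the asserted bound follows at once from the one-square estimate $\prob(S\in\mathcal S_n)\le c_2\,2^{-\xi(4)n}$, since then $2^{-2(n-m)\xi(4)}$ is bounded below.)

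First I would read off from the event $\{S,T\in\mathcal S_n\}$ a four-crossing non-disconnection configuration in each annulus. For the fine annulus around $S$ this is exactly the single-square picture underlying the one-square upper bound: the two visits to $S$, together with the excursion to distance close to $\delta$ made between them, produce two inward and two outward crossings of the annulus, and, because $\ball(S,2^{-n})$ is not disconnected from infinity, their union does not disconnect $\ball(S,\rho_1)$ from $\partial\ball(S,\rho_2)$; the same holds around $T$. For the coarse annulus I would instead extract the two crossings (one outward, one inward) contained in the distance-$\delta$ excursion between the two visits to $S$, and the two crossings contained in the corresponding excursion between the two visits to $T$, giving four crossings of ${\rm cl}\,\ball(z,\rho_4)\setminus D$. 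Since $\ball(S,2^{-n})\subset D$ is not disconnected from infinity, neither is $D$, and hence this subset of the path lying outside $D$ does not disconnect $D$ from $\partial\ball(z,\rho_4)$. Thus both squares are genuinely needed at the coarse scale, each contributing two of the four crossings.

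I would then factorise the joint probability over the three disjoint annuli. Decomposing the path at its successive hitting times of $\partial\ball(S,\rho_2)$, $\partial\ball(T,\rho_2)$, $\partial D$ and $\partial\ball(z,\rho_4)$, the strong Markov property makes the intervening pieces conditionally independent given their endpoints, the Harnack principle applied on the fixed-aspect-ratio buffer replaces those endpoints by uniform starting points on the circles at bounded multiplicative cost (exactly as in Lemma~\ref{start}), and the time-reversibility of Brownian excursions lets me put inward and outward crossings on an equal footing. Up to a constant this bounds $\prob(S,T\in\mathcal S_n)$ by the product of the three disconnection probabilities. Applying the upper bound of Lemma~\ref{lowandup} with $k=4$ to each factor gives $C\,2^{-(n-m)\xi(4)}$ for each fine annulus and $C\,2^{-m\xi(4)}$ for the coarse annulus, the dependence on $\delta$ and on the fixed radius ratios being absorbed into the constants; the product is the claimed bound $c_3\,2^{-2(n-m)\xi(4)}\,2^{-m\xi(4)}$.

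The main obstacle is this factorisation. One must check that the four coarse crossings form a legitimate disconnection configuration even when the two distance-$\delta$ excursions interleave in time, and that the repeated strong Markov and Harnack decoupling can be performed simultaneously at all three scales. It is essential here to compare with the \emph{unconditioned} crossings of Lemma~\ref{lowandup}, whose upper bound carries no logarithmic prefactor, rather than with the excursions of Lemma~\ref{lsw}: were the prefactors $(n_2-n_1)^k$ of Lemma~\ref{lsw} to enter at each scale they would accumulate and spoil the bound, and it is precisely their cancellation against the hitting probabilities, already built into Lemma~\ref{lowandup}, that keeps the estimate clean.
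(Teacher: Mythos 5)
Your overall strategy is the paper's: three pairwise disjoint annuli (one fine annulus around each of $S$ and $T$, one coarse annulus around the midpoint $z$), four non-disconnecting crossings extracted in each, decoupling by the strong Markov property and the Harnack principle as in Lemma~\ref{start}, three applications of the upper bound of Lemma~\ref{lowandup} with $k=4$, and the degenerate case of small $n-m$ handled by the first moment bound. Your closing remark about why one must use the unconditioned bound of Lemma~\ref{lowandup} rather than Lemma~\ref{lsw} is correct and is exactly why the paper routes the estimate through that lemma.

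There is, however, one concrete step that fails as you have written it: the claim that at the coarse scale ``each square contributes two of the four crossings,'' the two crossings coming from the distance-$\delta$ excursion between its two visits. The six required visits ($S$, $\partial\ball(z,\sfrac\delta2)$, $S$ and $T$, $\partial\ball(z,\sfrac\delta2)$, $T$, each triple in order) can interleave in eight ways, and in the orderings of the type $S\leadsto T\leadsto\partial\ball(z,\sfrac\delta2)\leadsto T\leadsto S$ the two distance-$\delta$ excursions share a \emph{single} visit to $\partial\ball(z,\sfrac\delta2)$: the $T$-excursion is nested inside the $S$-excursion, and together they supply only two distinct coarse crossings (one outward, one inward), not four. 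The missing two crossings must instead be taken from the initial entrance of the path into $\ball(z,\sfrac\delta2)$ (the motion starts at the origin, at distance more than $2\delta$ from $S_0$) and from its final exit towards $\partial\ball(0,R)$ after the last visit. The paper deals with this by enumerating the eight orderings $E_1,\dots,E_8$ explicitly and, for a representative interleaved case, writing down twenty-four stopping times whose induced twelve curves give exactly four crossings of each annulus with disjoint time intervals; your proposal flags the interleaving as ``to be checked'' but the specific source you name for the coarse crossings does not survive that check, so this case analysis (or an equivalent device) must be supplied before the factorisation can be carried out.
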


\begin{proof}
We denote by $z$ be the middle point between the centres of $S$ and $T$.
If $S,T\in{\mathcal S}_n$ we know that the Brownian path visits the sets $S, \partial\ball(z, \sfrac\delta2), S$
and $T,\partial\ball(z, \sfrac\delta2), T$, both in that order, before exiting $\ball(0,R)$, but there are 
eight possible combinations of these events, not counting possible additional visits of $\partial\ball(z, \sfrac\delta2)$. 
These can be described symbolically as follows:
\begin{align*}
E_1: \qquad & S \leadsto\partial\ball(z, \sfrac\delta2)\leadsto S \leadsto T \leadsto\partial\ball(z, \sfrac\delta2)\leadsto T\\
E_2: \qquad & S \leadsto\partial\ball(z, \sfrac\delta2)\leadsto T\leadsto S\leadsto \partial\ball(z, \sfrac\delta2)\leadsto T\\
E_3: \qquad & S \leadsto T\leadsto \partial\ball(z, \sfrac\delta2)\leadsto S\leadsto T\\
E_4: \qquad & S \leadsto T\leadsto \partial\ball(z, \sfrac\delta2)\leadsto T\leadsto S\\
E_5: \qquad & T \leadsto \partial\ball(z, \sfrac\delta2)\leadsto T\leadsto S\leadsto \partial\ball(z, \sfrac\delta2)\leadsto S\\
E_6: \qquad & T \leadsto \partial\ball(z, \sfrac\delta2)\leadsto S\leadsto T\leadsto \partial\ball(z, \sfrac\delta2)\leadsto S\\
E_7: \qquad & T \leadsto S\leadsto \partial\ball(z, \sfrac\delta2)\leadsto T\leadsto S\\
E_8: \qquad & T \leadsto S\leadsto \partial\ball(z, \sfrac\delta2)\leadsto S\leadsto T
\end{align*}
Note that, owing to possible additional visits, these events are not disjoint.
Each of the events allows a similar estimate, and for notational convenience we focus here on 
the event~$E_4$, which is satisfied in the case sketched in Figure~\ref{f1}. 
We first assume that $n\ge m+4$. In this case we define an increasing 
sequence of twenty-four stopping times:
$$\begin{aligned}
t_1 & = \tau\big(\ball(z,\sfrac\delta2)\big), \qquad
&  t_2 & = \tau\big(\ball(z,2^{-m+2})\big),\\[1mm]
t_3 & = \tau\big(\ball(S,2^{-m-3})\big), \qquad
&  t_4 & = \tau\big(\ball(S,2^{-n})\big),\\[1mm]
t_5 & = \tau\big(S, \partial\ball(S,2^{-n})\big), \qquad
&  t_6 & = \tau\big(S, \partial\ball(S,2^{-m-3})\big),\\[1mm]
t_7 & = \tau\big(S, \ball(T,2^{-m-3})\big), \qquad
&  t_8 & = \tau\big(S, \ball(T,2^{-n})\big),\\[1mm]
t_9 & = \tau\big(S,T, \partial\ball(T,2^{-n})\big), \qquad
&  t_{10} & = \tau\big(S,T, \partial\ball(T,2^{-m-3})\big),\\[1mm]
t_{11}  & = \tau\big(S, T, \partial\ball(z, 2^{-m+2}) \big), \qquad
&  t_{12} & = \tau\big(S, T, \partial\ball(z,\sfrac\delta3)\big),\\[1mm]
t_{13} &  =\tau\big(S, T, \partial\ball(z,\sfrac\delta2)\big), &
t_{14}  & = \tau\big(S,T, \partial\ball(z,\sfrac\delta2), \ball(z, 2^{-m+2})\big), \\[1mm]
t_{15} & = \tau\big(S,T, \partial\ball(z,\sfrac\delta2), \ball(T,2^{-m-3})\big), \qquad
&  t_{16} & = \tau\big(S,T, \partial\ball(z,\sfrac\delta2), \ball(T,2^{-n})\big),\\[1mm]
t_{17} & = \tau\big(S,T, \partial\ball(z,\sfrac\delta2), T, \partial\ball(T,2^{-n})\big), \qquad
&  t_{18} & = \tau\big(S,T, \partial\ball(z,\sfrac\delta2), T, \partial\ball(T,2^{-m-3})\big),\\[1mm]
t_{19} & = \tau\big(S,T, \partial\ball(z,\sfrac\delta2), T,\ball(S,2^{-m-3})\big), \qquad
&  t_{20} & = \tau\big(S,T, \partial\ball(z,\sfrac\delta2), T,\ball(S,2^{-n})\big),\\[1mm]
t_{21} & = \tau\big(S,T, \partial\ball(z,\sfrac\delta2), T,S, \partial\ball(S,2^{-n})\big), \qquad
&  t_{22} & = \tau\big(S,T, \partial\ball(z,\sfrac\delta2), T,S, \partial\ball(S,2^{-m-3})\big),\\[1mm]
t_{23}  & = \tau\big(S,T, \partial\ball(z,\sfrac\delta2), T,S, \partial\ball(z, 2^{-m+2}) \big), \qquad
&  t_{24} & = \tau\big(S,T, \partial\ball(z,\sfrac\delta2),T,S, \partial\ball(z,\sfrac\delta2)\big).\\[1mm]
\end{aligned}$$
For $1\le j\le 12$ we define the curves
$$W_{2j-1}\colon  [0,t_{2j}-t_{2j-1}] \to  \R^2, \qquad  W_{2j-1}(t)  =W_{t_{2j-1}+t}.$$
Although the twelve curves we have now defined are not independent, arguing with 
the Harnack principle as in Lemma~\ref{start} shows that this may be assumed at the expense
of a constant multiplicative factor. The event $S,T\in{\mathcal S}_n$ implies that, on the large
scale, the  curves $W_1,W_6,W_7,W_{12}$ do not disconnect $\ball(z,2^{-m+2})$ from infinity, and, 
on the small scale, the curves $W_2, W_3, W_{10}, W_{11}$ do not disconnect $\ball(S,2^{-n})$ from
infinity, and the curves $W_4, W_5, W_8, W_9$ do not disconnect $\ball(T,2^{-n})$ from infinity. 
Using Lemma~\ref{lowandup} now shows that, for a suitable constant $C>0$,
$$\prob\big(\big\{S,T\in{\mathcal S}_n\big\} \cap E_4 \big) \le C\, 
\, \cdot 2^{-m\xi(4)} \cdot 2^{-2(n-m)\xi(4)}.$$
This also holds in the degenerate case $n<m+4$, in which we can neglect the small scale
and apply the first moment bound to a square with sidelength of order $2^{-m}$, which 
contains both $S$ and~$T$. The final result follows now by summing over all estimates 
for $E_1,\ldots, E_8$, in which we only have to define the stopping times suitably, in 
order to avoid an overlap of the intervals defining the twelve curves.
\end{proof}

\subsection{Proof of Lemma~\ref{lsw}}\label{lsw_pf}
We follow the lines of~\cite{LSW02}. Recall that $\annulus(0,n)$ 
denotes the annulus between the circles~$\circle_0$ and 
$\circle_n$ and suppose that $(Y^{\ssup i}_t \colon 0\le t\le \tau^{\ssup i})$ 
are independent Brownian excursions across $\annulus(0,n)$ for $i \in \{1,\ldots, k\}$. The set 
$$\annulus(0,n)\setminus\bigcup_{i=1}^k \big\{ Y^{\ssup i}_t \colon 0\le t\le \tau^{\ssup i} \big\}$$ 
contains at most $k$ connected components joining  $\circle_0$ and $\circle_n$. 
As defined in~\cite{LSW02} we let $L_n(j)$ be $\pi$ times the extremal distance between 
$\circle_0$ and $\circle_n$ in the $j$-th of these components and $L_n^k$ be the minimum of these 
extremal distances over all $1\leq j\leq k$. The crucial fact about extremal distances used here
is that $L_n^k$ is finite if and only if the excursions do not disconnect $\circle_0$ from infinity. 
\medskip

The following lemma is Theorem~3.1 in~\cite{LSW02}.

\begin{lemma}\label{lsw1}
For any $\lambda_0 >0$ and any $k\in \N$, there exist $\xi(k,\lambda)$ such that 
$$ \E \left[e^{-\lambda L_n^k}\right] \asymp n^k e^{-n\xi(k,\lambda)},$$
where $\asymp$ means that the ratio of the two sides is bounded away from zero
and infinity by constants not depending on the choice of
$\lambda \in (0,\lambda_0]$ and $n\geq 1$.
\end{lemma}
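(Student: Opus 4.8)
The plan is to treat $u_k(n,\lambda):=\E[e^{-\lambda L_n^k}]$ as a function of the width $n$ and to show that, up to bounded multiplicative errors and a polynomial correction, it behaves multiplicatively in $n$; the exponent $\xi(k,\lambda)$ then emerges from a subadditivity argument and the prefactor $n^k$ from a sharp quasi-multiplicativity. The two structural facts driving everything are the conformal invariance of extremal distance and its behaviour under series composition, coupled with the strong Markov property (and time-reversibility) of the Brownian excursions.

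First I would cut $\annulus(0,n)$ at an intermediate circle $\circle_m$ into $\annulus(0,m)$ and $\annulus(m,n)$. Every component of the complement of the excursions in $\annulus(0,n)$ joining $\circle_0$ to $\circle_n$ restricts, in each sub-annulus, to a component joining the two boundary circles of that sub-annulus; the serial rule for extremal length then shows that the $\pi$-scaled extremal distance across the large component is at least the sum of the two $\pi$-scaled extremal distances across the sub-components. Passing to minima gives the superadditivity $L_n^k \ge L_m^k + \tilde L_{n-m}^k$, where $L_m^k$ and $\tilde L_{n-m}^k$ are built, respectively, from the portions of the excursions inside and outside $\circle_m$. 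These portions are not independent, since an excursion may cross $\circle_m$ repeatedly, but the strong Markov property at $\circle_m$ makes them conditionally independent given the crossing data, with Radon--Nikodym densities bounded via the Harnack principle, exactly as in Lemma~\ref{start}. As $e^{-\lambda\,\cdot}$ is decreasing, superadditivity of $L_n^k$ yields submultiplicativity of the expectation, $u_k(n+m,\lambda)\le C\,u_k(n,\lambda)\,u_k(m,\lambda)$, with $C$ independent of $n,m,\lambda$. Fekete's lemma then produces $\xi(k,\lambda)=-\lim_{n\to\infty}\tfrac1n\log u_k(n,\lambda)$ and, in particular, the clean lower bound $u_k(n,\lambda)\ge c\,e^{-n\xi(k,\lambda)}$.

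The content of the lemma, however, is the two-sided estimate with the correct polynomial factor, for which submultiplicativity alone is insufficient: it yields neither the upper bound nor the power $n^k$. I would therefore upgrade the previous step to the quasi-multiplicativity $u_k(n+m,\lambda)\asymp u_k(n,\lambda)\,u_k(m,\lambda)$. The matching bounds come from restricting to configurations in which the $k$ excursions cross the interface circle $\circle_m$ through $k$ well-separated arcs and in a prescribed cyclic order, in the spirit of the $\alpha$-nice configurations. On such a separation event the minimal component of $\annulus(0,n)$ is determined unambiguously by those of the two sub-annuli, so the extremal distances concatenate additively rather than only superadditively; and the probability cost of arranging the $k$ crossings into separated arcs, spread over the $O(n)$-sized family of admissible angular positions of each excursion, contributes exactly one factor of order $n$ per excursion. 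Iterating over a dyadic sequence of scales and absorbing the boundary layers near $\circle_0$ and $\circle_n$ then assembles these into the prefactor $n^k$, while the passage between fixed and uniform endpoints on the circles is handled by Harnack and Beurling-type estimates.

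The main obstacle is precisely this sharp prefactor together with its uniformity in $\lambda$. Establishing the existence of $\xi(k,\lambda)$ and two-sided bounds up to $n$-dependent constants is a routine consequence of the (quasi-)multiplicative structure; the delicate points are (i) to show that the separation events at each interface leave exactly $k$ independent angular degrees of freedom, so that the power of $n$ is neither more nor less than $k$, and (ii) to keep all comparison constants uniform over $\lambda\in(0,\lambda_0]$. The uniformity in $\lambda$ is the reason a cut-off $\lambda_0$ is imposed: as $\lambda\downarrow 0$ the weight $e^{-\lambda L_n^k}$ stops penalising near-disconnecting configurations and the relevant quantities degenerate, so one must verify that the constants in the quasi-multiplicativity and in the separation estimates do not deteriorate on $(0,\lambda_0]$.
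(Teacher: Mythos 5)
You should first be aware that the paper offers no proof of this statement: Lemma~\ref{lsw1} is quoted as Theorem~3.1 of \cite{LSW02}, so your attempt has to be measured against that reference rather than against anything in the text. Measured that way, there is a genuine error at the core of your outline. The quasi-multiplicativity you propose, $u_k(n+m,\lambda)\asymp u_k(n,\lambda)\,u_k(m,\lambda)$, is incompatible with the very estimate you are trying to prove: if $u_k(n,\lambda)\asymp n^k e^{-n\xi(k,\lambda)}$, then
$$\frac{u_k(n,\lambda)\,u_k(m,\lambda)}{u_k(n+m,\lambda)}\;\asymp\;\Big(\frac{nm}{n+m}\Big)^{k}\;\longrightarrow\;\infty \qquad (n=m\to\infty),$$
so the lower half of your quasi-multiplicativity must fail. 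Conversely, if $u_k$ really were quasi-multiplicative, the standard iteration would force $u_k(n,\lambda)\asymp e^{-n\xi(k,\lambda)}$ with \emph{no} polynomial correction; a quasi-multiplicative quantity never acquires a prefactor $n^k$ by ``iterating over a dyadic sequence of scales'', which is exactly what your third paragraph asks it to do. Your submultiplicative half (series rule for extremal length, superadditivity of $L^k$ across an interface circle, Harnack decoupling of the two halves, Fekete's lemma) is sound and does give the existence of $\xi(k,\lambda)$ together with $u_k(n,\lambda)\ge c\,e^{-n\xi(k,\lambda)}$, but everything beyond that rests on the false matching bound.

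The factor $n^k$ has a different origin, and the paper itself displays the mechanism. The quantity that is genuinely quasi-multiplicative and satisfies the clean estimate $\asymp e^{-n\xi(k,\lambda)}$ is the Brownian-motion version of $u_k$ (paths started on one boundary circle of $\annulus(0,n)$ and stopped on the other, as in Lemma~\ref{lowandup}); an excursion from $\circle_0$ to $\circle_n$ is such a path conditioned on the event, of probability comparable to $1/n$, that it crosses the annulus before returning to $\circle_0$, and undoing this conditioning for each of the $k$ paths contributes exactly one factor of order $n$ per path. This is precisely the computation with the geometric random variables $N^{\ssup i}$ of success probability $1/(n_2-n_1+1)$ in the proof of Lemma~\ref{lowandup}, run in the opposite direction, and it is how the $n^k$ arises in \cite{LSW02}. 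Your heuristic attributing a factor of order $n$ per excursion to ``the $O(n)$-sized family of admissible angular positions'' at the interface circle has no basis: angular positions on a circle do not form an $O(n)$-sized family, and the separation ($\alpha$-nice) events cost only a constant per scale --- that is the whole point of the separation lemma. To repair the argument you would need to run your machinery on the correctly normalized quantity $u_k(n,\lambda)/n^k$, or equivalently establish the estimate for the Brownian-motion quantity first and transfer it to excursions at the end.
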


Letting $\lambda\downarrow 0$ in the previous lemma shows that the probability
that the excursions do not disconnect $\circle_0$ from infinity equals
$$\begin{aligned}
\prob\big( L_n^k<\infty \big)
& = \lim_{\lambda\downarrow 0} \E \left[e^{-\lambda L_n^k}\right]
\asymp n^k e^{-n\xi(k)},
\end{aligned}$$
where $\xi(k)=\lim_{\lambda\downarrow 0} \xi(k,\lambda)$. In \cite{LSW02} it is shown,
using a similar argument for Brownian motion in place of Brownian excursion, that
this $\xi(k)$ is also the disconnection exponent for Brownian motion as defined in our
framework.
\medskip

We now adapt Lemma 4.1 in~\cite{LSW02} for our purpose, remembering that our notion of 
an $\alpha$-nice configuration is relaxed compared to the notion in~\cite{LSW02}.

\begin{lemma}\label{lsw2}
There exists an $\alpha_0>0$ such that, for any $\lambda_0>0$,  
$$ \E \left[e^{-\lambda L_n^k} \one_{\alpha\mbox{\scriptsize{-nice}}}\right] 
\asymp n^k e^{-n\xi(k,\lambda)},$$
where the implied constants are not depending on the choice of
$\lambda \in (0,\lambda_0]$, $\alpha\in (0,\alpha_0]$ and $n\geq 1$.
\end{lemma}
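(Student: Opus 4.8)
The statement splits into a trivial half and a substantial half. The \emph{upper} bound is immediate: since $\one_{\alpha\text{-nice}}\le 1$ we have
$\E[e^{-\lambda L_n^k}\one_{\alpha\text{-nice}}]\le \E[e^{-\lambda L_n^k}]\asymp n^k e^{-n\xi(k,\lambda)}$
by Lemma~\ref{lsw1}, with the same constants, which are already uniform over $\lambda\in(0,\lambda_0]$ and $n\ge 1$ and do not involve $\alpha$ at all. So all the work lies in the matching \emph{lower} bound, and my plan is to show that imposing $\alpha$-niceness costs only a constant factor.

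The quickest route exploits precisely the fact that our notion is a \emph{relaxation} of the one in~\cite{LSW02}. If I take $\alpha_0$ to be the threshold attached to their stronger notion, then every configuration that is nice in the sense of~\cite{LSW02} is, a fortiori, $\alpha$-nice in our weaker sense (our condition~(ii) is a weakening of theirs, and~(i) is void for excursions). Writing $N$ for the event that the excursions are nice in their stronger sense, this gives $\one_{\alpha\text{-nice}}\ge \one_{N}$ pointwise, hence $\E[e^{-\lambda L_n^k}\one_{\alpha\text{-nice}}]\ge \E[e^{-\lambda L_n^k}\one_{N}]$, and the right-hand side is bounded below by $c\,n^k e^{-n\xi(k,\lambda)}$ by Lemma~4.1 of~\cite{LSW02}, with the stated uniformity in $\lambda$ and $n$ inherited directly. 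This settles the lemma, and is the sense in which the result is an \emph{adaptation}.

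To keep the argument self-contained, relying only on Lemma~\ref{lsw1}, I would instead manufacture the niceness by resampling the two boundary collars. Assume $n\ge 3$, the finitely many smaller $n$ being absorbed into the constants since both sides are then bounded above and below by positive constants. Split each excursion at the first hit of $\circle_1$ and the last exit from $\circle_{n-1}$ into an inner part in $\annulus(0,1)$, a bulk part in $\annulus(1,n-1)$, and an outer part in $\annulus(n-1,n)$. Conditionally on the bulk parts, the Harnack principle lets me resample the inner and outer parts, confining each to one of $k$ pairwise disjoint, angularly separated wedges based at the origin, at a conditional cost bounded below by a constant $c_0=c_0(k)>0$ uniform in $n$. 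The endpoints then lie in separated arcs of $\circle_0$ and $\circle_n$; the discs of radius $\alpha|\gamma_0^{\ssup i}|=\alpha$ and $\alpha|\gamma_{\tau^{(i)}}^{\ssup i}|=\alpha e^n$ subtend angles of order $\alpha$, so for $\alpha\le\alpha_0$ they stay inside the angular gaps and leave a radial corridor from the origin to infinity, whence condition~(ii) holds and the configuration is $\alpha$-nice. Applying Lemma~\ref{lsw1} on the bulk annulus $\annulus(1,n-1)$, where $(n-2)^k e^{-(n-2)\xi(k,\lambda)}\asymp n^k e^{-n\xi(k,\lambda)}$ uniformly in $\lambda\le\lambda_0$ (the factor $e^{2\xi(k,\lambda)}$ is bounded because Lemma~\ref{lsw1} forces $\xi(k,\lambda)\le\xi(k,\lambda_0)+O(1)$), then yields the lower bound.

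The main obstacle in this second route is the \emph{extremal-distance comparison}: I must show that swapping the collar pieces for the separated ones perturbs the functional $L_n^k=\min_j \pi\,(\text{extremal distance of the }j\text{-th component})$ by at most a universal additive constant $C$, uniform in $n$, $\lambda$, and the bulk configuration, so that $e^{-\lambda L_n^k}$ changes by a factor at least $e^{-\lambda_0 C}$. This is exactly where the separation is essential: a crossing of a unit-width annulus inside a fat wedge contributes only bounded extremal distance, and one must argue that the minimising component is stable under the swap. The estimate rests on Ahlfors' series and composition rules for extremal length together with Beurling-type bounds, and the uniformity over $\lambda\in(0,\lambda_0]$ is then automatic, since the bounded additive change $C$ is converted into the multiplicative constant $e^{-\lambda_0 C}$ precisely because $\lambda\le\lambda_0$.
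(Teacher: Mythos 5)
Your first route is essentially the paper's own proof: the upper bound is the trivial $\one_{\alpha\mbox{\scriptsize -nice}}\le 1$ step, and the lower bound is imported from Lemma~4.1 of~\cite{LSW02} using exactly the observation that the present $\alpha$-niceness is a relaxation of theirs; the paper merely phrases that input as the defect bound $\E\big[e^{-\lambda L_n(j)}\big]-\E\big[e^{-\lambda L_n(j)}\one_{\alpha\mbox{\scriptsize -nice}}\big]\le \eps\,\E\big[e^{-\lambda L_{n-2}^k}\big]$, sums over $j\le k$, and combines with Lemma~\ref{lsw1} to make the non-nice contribution a small fraction of the total. Your alternative self-contained resampling route is not needed and, as you acknowledge, leaves the key extremal-distance comparison unproved, so the citation-based argument is the one to keep.
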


\begin{proof}
We assume that $n \ge 3$, and recall that Lemma~4.1 in~\cite{LSW02} implies that for any $j\leq k$, $\eps>0$ there 
is $\alpha_0>0$ such that, for all $0<\alpha \le\alpha_0$,
$$\E \left[e^{-\lambda L_n(j)}\right]-\E \left[e^{-\lambda L_n(j)} \one_{\alpha\mbox{\scriptsize{-nice}}}\right] 
\leq  \eps \, \E \left[e^{-\lambda L_{n-2}^k}\right].$$ 
Hence, using Lemma~\ref{lsw1} in the last step,
\begin{eqnarray*}
\E \left[e^{-\lambda L_n^k} \one_{\mbox{\scriptsize{not$\,\alpha$-nice}}}\right]
&\leq& \sum_{j=1}^k \Big( \E \big[e^{-\lambda L_n(j)}\big] - \E \big[e^{-\lambda L_n(j)}
\one_{\mbox{\scriptsize{$\alpha$-nice}}}\big] \Big)\\
&\leq& k \, \eps \, \E \left[e^{-\lambda L_{n-2}^k}\right]\\
& \leq & n^k e^{-n\xi(k,\lambda)} \times \Big(\eps\, k \, c_2 \left(1-\sfrac 2n\right)^k 
e^{2\xi(k,\lambda)} \Big).
\end{eqnarray*}
The bracket can be made arbitrarily small by choice of~$\eps>0$. The result now follows by combining
this inequality with Lemma~\ref{lsw1}.
\end{proof}
\medskip

The proof of Lemma~\ref{lsw} is completed by the following lemma together with
Brownian scaling.
\medskip

\begin{lemma}
Suppose $(Y^{\ssup i}_t \colon 0\le t \le \tau^{\ssup i})$ are independent Brownian excursions 
across~$\annulus(0,n)$ for $i \in \{1,\ldots, k\}$. 
Then there exists an $\alpha_0>0$ such that, for every $\alpha \in [0,\alpha_0]$,
\begin{eqnarray*}
\prob\big( L_n^k < \infty \big)  &\asymp&  n^k e^{-n\xi(k)}, \ \ \ \ \ \ \ \ \  \mbox{ and }\\
\prob\big( \{L_n^k < \infty\} \cap \{ \alpha \mbox{-nice} \} \big)  &\asymp& n^k e^{-n\xi(k)}.
\end{eqnarray*}
\end{lemma}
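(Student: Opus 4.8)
The plan is to deduce both estimates directly from Lemmas~\ref{lsw1} and~\ref{lsw2} by letting the mass parameter $\lambda$ decrease to zero. Two facts make this work. First, for a fixed realization, $e^{-\lambda L_n^k}$ increases to $\one_{\{L_n^k<\infty\}}$ as $\lambda\downarrow 0$, since $e^{-\lambda\cdot\infty}=0$ on the event $\{L_n^k=\infty\}$; hence by monotone convergence $\E[e^{-\lambda L_n^k}]\to\prob(L_n^k<\infty)$. Second, the implied constants in the $\asymp$ relations of Lemmas~\ref{lsw1} and~\ref{lsw2} are uniform over $\lambda\in(0,\lambda_0]$, while $\xi(k,\lambda)\to\xi(k)$ as $\lambda\downarrow 0$ by the very definition of $\xi(k)$, so that $e^{-n\xi(k,\lambda)}\to e^{-n\xi(k)}$ for each fixed $n$.

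For the first relation I would write the bounds of Lemma~\ref{lsw1} as $A\,n^k e^{-n\xi(k,\lambda)}\le \E[e^{-\lambda L_n^k}]\le B\,n^k e^{-n\xi(k,\lambda)}$, with $A,B>0$ independent of $\lambda$ and $n$. For the lower bound, use the pointwise inequality $e^{-\lambda L_n^k}\le\one_{\{L_n^k<\infty\}}$, which gives $\prob(L_n^k<\infty)\ge\E[e^{-\lambda L_n^k}]\ge A\,n^k e^{-n\xi(k,\lambda)}$ for every $\lambda>0$; letting $\lambda\downarrow 0$ yields $\prob(L_n^k<\infty)\ge A\,n^k e^{-n\xi(k)}$. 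For the upper bound, pass to the limit on the left by monotone convergence: $\prob(L_n^k<\infty)=\lim_{\lambda\downarrow 0}\E[e^{-\lambda L_n^k}]\le B\,n^k e^{-n\xi(k)}$. This reproduces the computation already sketched before the statement.

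The second relation follows by the same scheme, now anchored on Lemma~\ref{lsw2}. The upper bound is immediate from the inclusion $\{L_n^k<\infty\}\cap\{\alpha\mbox{-nice}\}\subseteq\{L_n^k<\infty\}$ together with the first relation. For the lower bound, use the pointwise inequality $e^{-\lambda L_n^k}\one_{\alpha\mbox{-nice}}\le\one_{\{L_n^k<\infty\}\cap\{\alpha\mbox{-nice}\}}$, so that Lemma~\ref{lsw2} gives, for every $\lambda>0$,
$$\prob\big(\{L_n^k<\infty\}\cap\{\alpha\mbox{-nice}\}\big)\ge\E\big[e^{-\lambda L_n^k}\one_{\alpha\mbox{-nice}}\big]\ge A'\,n^k e^{-n\xi(k,\lambda)},$$
with $A'>0$ uniform in $\lambda$, in $\alpha\in(0,\alpha_0]$, and in $n$. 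Letting $\lambda\downarrow 0$ delivers the matching lower bound $A'\,n^k e^{-n\xi(k)}$ for every $\alpha\in(0,\alpha_0]$; the endpoint $\alpha=0$ is covered by the first relation, since the defining discs then degenerate and $\{0\mbox{-nice}\}=\{L_n^k<\infty\}$.

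There is essentially no analytic obstacle left once Lemmas~\ref{lsw1} and~\ref{lsw2} are in hand: the only point requiring care is the interchange of the $\lambda\downarrow 0$ limit with the two-sided bounds, which is legitimate precisely because the sandwiching constants do not depend on $\lambda$ and $\xi(k,\lambda)$ converges. The genuinely hard input, namely the control of $\E[e^{-\lambda L_n^k}\one_{\alpha\mbox{-nice}}]$ for the relaxed notion of $\alpha$-niceness, has already been absorbed into Lemma~\ref{lsw2}. Finally, Brownian scaling transports these two estimates from $\annulus(0,n)$ to a general annulus $\annulus(n_1,n_2)$, with $n$ replaced by $n_2-n_1$, which completes the proof of Lemma~\ref{lsw}.
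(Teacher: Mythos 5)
Your proof is correct and follows essentially the same route as the paper: both deduce the two estimates from Lemmas~\ref{lsw1} and~\ref{lsw2} by monotone convergence as $\lambda\downarrow 0$, using the uniformity of the implied constants in $\lambda$ and the convergence $\xi(k,\lambda)\to\xi(k)$. You have merely spelled out the sandwiching argument in more detail than the paper does.
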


\begin{proof} We use monotone convergence and Lemma~\ref{lsw1} to see
\begin{eqnarray*}
\prob\big( L_n^k < \infty \big) 
&=& \lim_{\lambda \downarrow 0}\E \left[ e^{-\lambda L_n^k}\right] \asymp \lim_{\lambda \downarrow 0}n^k e^{-n\xi(k,\lambda)} = n^k e^{-n\xi(k)},
\end{eqnarray*} and the second estimate is proved the same way using Lemma~\ref{lsw2}.
\end{proof}

\subsection{The zero-one law.} It remains to show that $\dim (D\cap \partial U) \ge 2- \xi(4)$ not only with 
positive probability, but actually with probability one. To do this, we need to identify
a point on the frontier and establish a variant of Blumenthal's zero-one law for the germ-$\sigma-$algebra 
of Brownian motion around that point. In our context it is best to use the endpoint~$W_\tau$ of the path, 
which is easily seen to be always on the frontier. 
\smallskip

The time reversal of $( W_t \colon t\in [0,\tau])$ for which, 
by rotational invariance, we may assume that $W_\tau =-1$, is the image under the conformal map 
$$f\colon \H \to \ball(0,1), \quad  f(z)= \frac{z-\im}{z+\im}$$
of a \emph{half-plane excursion} started at zero. Half-plane excursions, as discussed in~\cite{La05},
can be written as $(Y_t \colon t\ge 0)$ with $Y_t:= B_t + \im \hat{B}_t$ where 
$(B_t \colon t\ge 0)$ is a real Brownian motion and $(\hat{B}_t \colon t\ge 0)$ an independent 
three-dimensional Bessel process both started at zero.
By the conformal invariance of Brownian excursions (up to time change) and the fact that conformal mappings 
preserve the Hausdorff-dimension of sets, it will be sufficient to consider the lower bound on the dimension of 
the double points on the frontier for a half-plane excursion in neighbourhoods of zero. We therefore
now denote by~$D$ the set of double points of a half-plane excursion $(Y_t \colon t\ge 0)$.
\smallskip
\pagebreak[1]

Let $I(a):=\{z \in \C \colon \Im(z) \in [0,a)\}$ and  $J(a):=\{z \in \C \colon \Im(z) \in [a,\infty)\}$ and we denote 
by $T_a$ the first hitting time of $J(a)$, i.e. $T_a := \inf \{ s>0 : Y_s \in J(a)\}$. For a set $A\subset \H$, we  write $U(A)$ for the union of the unbounded connected components of $\H \setminus A$.
We initially focus on the half-plane excursion up to time~$T_{3b}$, where $b>0$ is some small constant, and require the 
following variant of our main result so far.

\begin{lemma}\label{01-1}
For every $b>0$ there is a positive probability that
$$\dim \big(\ball(0,b) \cap D \cap \partial U( \{Y_s \colon s\in [0,T_{3b}]\} \cup  J(2b)) \big) \geq 2-\xi(4).$$
\end{lemma}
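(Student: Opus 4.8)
The plan is to reduce Lemma~\ref{01-1} to the local double-point estimates of Lemma~\ref{main}, transported into the half-plane excursion picture. First I would fix the small square $S_0$ to sit inside $\ball(0,b)$ at a definite height, say with $S_0 \subset \{z \colon \Im(z)\in[b/2,b]\}$, so that $S_0$ stays safely away from the real line and also away from $J(2b)$. The role of the stopping time $\tau$ and the killing disc $\ball(0,R)$ in the earlier sections is now played by the geometry up to time $T_{3b}$: the excursion is forced to reach height $3b$, and the obstacle set whose complement we take frontiers of is $\{Y_s \colon s\in[0,T_{3b}]\}\cup J(2b)$. I would redefine the collection $\mathcal S_n$ of $\delta$-good subsquares of $S_0$ verbatim as in the Definition, but with the non-disconnection condition~(2) replaced by ``$\ball(S,2^{-n})$ is not disconnected from infinity \emph{within} $\H$ by the path together with $J(2b)$,'' and with $\delta$ chosen smaller than $b/16$ so that the excursions of size $\delta$ that realise the double point never leave $\ball(0,b)$ and never touch $J(2b)$.

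With this setup the identity
$$\ball(0,b)\cap D\cap \partial U\big(\{Y_s \colon s\in[0,T_{3b}]\}\cup J(2b)\big) \supset \bigcup_{\delta>0}\,\bigcap_{n=N(\delta)}^\infty\,\bigcup_{S\in\mathcal S_n} S$$
holds for exactly the same reason as before: a point lying in a nested sequence of $\delta$-good squares is a double point, accessible from infinity in $\H$. So it suffices to reprove the first- and second-moment bounds of Lemma~\ref{main} in the half-plane excursion setting, and then feed them into the standard second-moment criterion \cite[Theorem~10.43]{MP08} to get $\dim\ge 2-\xi(4)$ with positive probability. The arguments of Sections~3.1--3.3 go through with only cosmetic changes, because everything there is built from Lemma~\ref{start}, Lemma~\ref{lowandup} and Lemma~\ref{lsw}, all of which are local statements about the behaviour of the process inside small annuli around $S$. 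The key point is that a half-plane excursion, \emph{away from the real line}, is mutually absolutely continuous with a planar Brownian motion with bounded density: on any compact subset of the open upper half-plane the Bessel component of $Y_t=B_t+\im\hat B_t$ has a bounded, bounded-away-from-zero transition density relative to real Brownian motion, so the Harnack-type comparisons underpinning Lemma~\ref{start} remain valid. Since $S_0$ and all the relevant annuli are at distance at least $\sim b$ from $\partial\H$, this absolute continuity provides the needed two-sided control.

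The main obstacle is precisely this absolute-continuity transfer near the boundary, together with the need for the non-disconnection event to involve $J(2b)$ rather than just the path. For the lower bound one must exhibit a strategy, as in Lemma~\ref{m=1lb}, in which the excursion reaches height $3b$, produces the required $\alpha$-nice configuration of four local curves around $S$, and routes its initial and terminal portions so as to leave $\ball(S,2^{-n})$ accessible from infinity in $\H\setminus J(2b)$; here one has the extra freedom that the half line $J(2b)$ itself helps ``close off'' the picture from above, and one must check that the conditional cost of the connecting segments is bounded below by a constant depending only on $b,\delta,\alpha$. For the second-moment upper bound the disconnection estimate of Lemma~\ref{lowandup} is applied on annuli centred at $S$, $T$ and their midpoint, all of which lie in the bulk of $\H$, so the Bessel/Brownian comparison again absorbs the difference into constants. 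I would therefore spend the bulk of the work verifying that $Y$ and planar Brownian motion are comparable with bounded density on the relevant bulk region, and then assert that the proofs of the three preceding lemmas apply mutatis mutandis, yielding the two moment bounds and hence the claimed dimension lower bound with positive probability.
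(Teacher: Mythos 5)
Your route is genuinely different from the paper's. You propose to rerun the whole second-moment machinery of Lemma~\ref{main} directly for the half-plane excursion, relying on bulk absolute continuity between the Bessel(3) component and linear Brownian motion. The paper instead splits the excursion at the first hitting time $T_1$ of an intermediate level and at the exit time $T$ of $\ball(Y_{T_1},\frac12)$, confines the piece before $T_1$ to a thin vertical strip, applies a Brownian-motion version of the lower bound (Lemma~\ref{01-2}) to the middle piece, and requires the tail to avoid the relevant region except near $\ball(Y_T,\gamma)$. The reduction to Lemma~\ref{01-2} works because, by \cite[5.3]{La05}, the post-$T_1$ process is an ordinary Brownian motion conditioned on the positive-probability event of hitting the higher level before the real line, hence absolutely continuous with respect to Brownian motion as a whole path segment with constant-bounded density; this spares the paper from re-deriving any moment estimates for the excursion.

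The gap in your plan sits exactly at the point you flag as ``the main obstacle'' and then leave unresolved. Bulk absolute continuity controls only those stretches of the excursion that stay in a compact subset of the open half-plane, but the obstacle set in Lemma~\ref{01-1} is the \emph{entire} path $\{Y_s\colon s\in[0,T_{3b}]\}$, including the initial stretch emanating from $0\in\partial\H$ and any later approaches to the real line. For the analogue of the first-moment lower bound (Lemma~\ref{m=1lb}) you must prescribe, with probability bounded below, the behaviour of these near-boundary stretches so that they do not disconnect $\ball(S,2^{-n})$ from infinity in $\H$; no comparison with planar Brownian motion is available there (a planar Brownian motion started at $0$ immediately crosses the real line, whereas the excursion never does), so ``mutatis mutandis'' is not a proof at the one place where the excursion law is genuinely singular with respect to Brownian motion. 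What is needed is either a direct positive-probability estimate for the excursion near $\partial\H$ (for instance that it climbs to a fixed height inside a thin vertical strip, which does follow from the explicit $B+\im\hat B$ description) or the paper's conditioning device. With one of these supplied your outline can be completed, but it then does strictly more work than the paper's three-piece decomposition, which quarantines all boundary effects into two crude geometric events and transfers only a single, already-proved Brownian statement.
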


Let us first argue how to complete the proof using Lemma~\ref{01-1}. Note first that the probability in Lemma~\ref{01-1} is independent of~$b$, which is clear by scaling invariance. Also, by the transience of the excursion, there is
a positive probability, independent of $b$, that once the excursion has reached $J(3b)$ it never 
visits $I(2b)$ again. This implies that, for every $b$, the event
$$A_b := \left\{ \dim \big( \ball(0,b)\cap D \cap \partial U( \{ Y_s \colon s \in[0,T_\infty] \} ) \big) 
\geq 2-\xi(4) \right\}$$
has a positive probability $p$, independent of~$b$. It is easy to see that, for $b'<b$, we have $A_{b'} \subset A_b$ and 
therefore $\bigcap_{b>0} A_b$ is an event of the germ-$\sigma$-algebra of the half-plane excursion, which is trivial by 
Blumenthal's zero-one law. 
Hence this intersection and all events $A_b$ must have probability one, because
$$\prob \Big(\bigcap_{b>0} A_b\Big)=\inf_{b>0} \prob\left(A_b\right) =p>0.$$
So it remains to show Lemma~\ref{01-1}, and by scaling invariance it suffices to discuss
the case~$b=2$. The following lemma is a variant of the lower bound proved in the previous sections.

\begin{lemma}\label{01-2} Let $(W_t \colon t\ge 0)$ be a planar Brownian motion started in some point~$z$ 
and stopped at the first hitting time~$T$ of the circle~$\partial\ball(z,\frac12)$. Fix $\gamma=\frac1{10}$ and
an arbitrary square $S_0$ of sidelength~$\frac 18$ in the upper half of $\ball(z,\frac12)$ with distance 
more than $2\gamma$ from both the horizontal line through~$z$ and the circle~$\partial\ball(z, \frac 12)$. Let 
$$\Gamma :=  \{W_s \colon 0\le s \le T\}  \cup \ball(W_T,\gamma) \cup 
\big\{x\in \ball(z,\sfrac12) \colon \Im(x) \leq \Im(z)\big\},$$
and define  $\partial U(\Gamma)$ to be the boundary of the unbounded component of the complement of~$\Gamma$.
Then, with positive probability, 
$$\dim \left(S_0\cap D \cap \partial U(\Gamma) \right) \geq 2-\xi(4).$$
\end{lemma}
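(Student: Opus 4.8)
The plan is to reproduce, in the present geometry, the second--moment argument that furnished the lower bound in Lemma~\ref{main}. I would fix a macroscopic scale $\delta>0$, chosen small relative to $\gamma$ (so that in particular $\delta<2\gamma$), and for $n\ge N(\delta)$ call a dyadic subsquare $S\subset S_0$ of sidelength $2^{-n}$ \emph{good} if the motion visits $S$, travels a distance close to $\delta$ without leaving a fixed neighbourhood of $S_0$ in the upper half of $\ball(z,\frac12)$, returns to $S$, and if moreover $\ball(S,2^{-n})$ is not disconnected from infinity by the entire set $\Gamma$. A point lying in a nested sequence of good squares is then a double point of the path that lies on $\partial U(\Gamma)$, so it suffices to establish the two estimates
$$\prob\big(S\in{\mathcal S}_n\big)\ge c_1\,2^{-\xi(4)n}\qquad\text{and}\qquad \prob\big(S,T\in{\mathcal S}_n\big)\le c_3\,2^{-2\xi(4)n}\,2^{\xi(4)m}$$
for pairs at distance of order $2^{-m}$, and then to invoke the mass distribution principle exactly as in \cite[Theorem~10.43]{MP08}.

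The second--moment upper bound is essentially identical to its counterpart in Lemma~\ref{main}. Adjoining the fixed sets $\ball(W_T,\gamma)$ and the lower half-disc to the path can only enlarge the obstacle, hence can only make non-disconnection \emph{less} likely; thus the two--scale estimate obtained from Lemma~\ref{lowandup}---once at the distance scale $2^{-m}$ and twice at the size scale $2^{-n}$, together with the hitting probabilities---carries over, with the outer radius now of order $\frac12$ in place of $R$.

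The substance of the proof is the first--moment lower bound, for which I would follow the strategy of Lemma~\ref{m=1lb}. As there, the dominant probabilistic cost arises from forcing the four local curve segments $W^{\ssup1}_1,W^{\ssup1}_2,W^{\ssup2}_1,W^{\ssup2}_2$ near $S$ into an $\alpha$-nice configuration, which by Lemma~\ref{start} and Lemma~\ref{lowandup} (with $\ell=2$, $k=4$) costs a constant multiple of $2^{-\xi(4)n}$. The new ingredient is that I must keep $\ball(S,2^{-n})$ accessible from infinity in the presence of the two fixed obstacles. Here the hypotheses are used decisively: since $S_0$ lies at distance more than $2\gamma$ from both the horizontal line through $z$ and the circle $\partial\ball(z,\frac12)$, there is room for a fixed corridor in the upper half of $\ball(z,\frac12)$ running from $S$ to an arc of $\partial\ball(z,\frac12)$ that avoids both the lower half-disc and $\ball(W_T,\gamma)$. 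Accordingly I would confine the initial part of the path (from $z$ up to a neighbourhood of $S$) and the final part (from $S$ out to a prescribed arc of the boundary, thereby fixing $W_T$ well away from the corridor) to thin strips, so that the bulk of the upper half-disc, and with it the corridor, remains uncovered by $\Gamma$. Each confinement and routing event has probability bounded below by a positive constant depending only on $\gamma$, $z$ and $S_0$, and the $\alpha$-nice margin guarantees that the corridor genuinely joins $\ball(S,2^{-n})$ to the unbounded component of the complement of $\Gamma$.

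The main obstacle is precisely this accessibility requirement. Unlike in Lemma~\ref{m=1lb}, where the only obstruction was the Brownian path itself, here the frontier is taken relative to $\Gamma$, and one must verify that the prescribed strips, together with the solid lower half-disc and the endpoint disc $\ball(W_T,\gamma)$, still leave every good square on $\partial U(\Gamma)$. The $2\gamma$-separation built into the hypotheses is exactly what provides the room to route both the path and the surviving corridor without conflict; once this geometric picture is pinned down, the two moment bounds and the concluding mass-distribution argument are routine adaptations of the earlier sections.
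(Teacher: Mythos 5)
Your proposal is correct and follows essentially the route the paper intends: the paper gives no separate proof of this lemma, merely describing it as ``a variant of the lower bound proved in the previous sections,'' i.e.\ the first- and second-moment estimates of Lemma~\ref{main} combined with the routing strategy of Lemma~\ref{m=1lb} (initial and final segments confined to thin strips, the four local curves forced into an $\alpha$-nice configuration, and a surviving corridor to the boundary). You correctly identify both the one genuinely new point --- that the $2\gamma$-separation of $S_0$ from the lower half-disc and from $\partial\ball(z,\frac12)$ leaves room to place $W_T$ and the corridor so that $\Gamma$'s extra obstacles do not block accessibility --- and the fact that the second-moment upper bound only improves when the obstacle is enlarged.
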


We now divide the strong Markov process~$\{Y_t \colon t\in[0,T_6]\}$ into three parts: First the part up to 
the first hitting time~$T_1$ of $J(1)$, second the part from~$T_1$ up to the time $T$ when the process has moved a 
distance of $1/2$ from its starting point~$Y_{T_1}$, and third the remaining part starting from $T$ 
up to the first hitting time $T_6$ of~$J(6)$. For the three parts we require the following events:
\begin{itemize}   
\item[(1)] The first part remains in a small vertical strip around its starting point, more precisely 
$$\{Y_t \colon t\in[0,T_1]\} \subset {\mathsf S}:=\{z\in\H \colon \Re(z) \in [-\gamma,\gamma]\}.$$
\item[(2)] The second part satisfies 
$\dim \left(S_0\cap D \cap \partial U(\Gamma) \right) \geq 2-\xi(4)$
where the implied sets are defined as in Lemma~\ref{01-2} for the process
$(Y_{T_1+t} \colon t\ge 0)$ in place of the Brownian motion.\\[-2mm]
\item[(3)] The third part intersects neither the strip $\mathsf S$ nor the disc~$\ball(Y_{T_1}, \frac 12)$ 
except possibly inside the ball~$\ball(Y_{T}, \gamma)$.
\end{itemize}

\begin{figure}[ht]
  \centerline{ \hbox{ \psfig{file=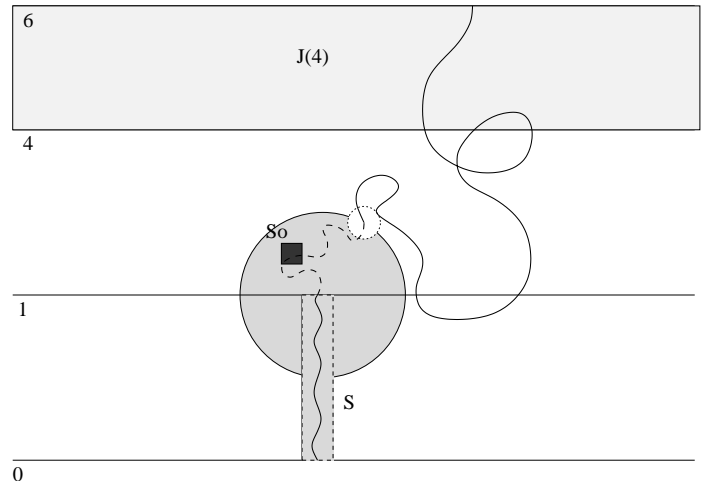,height=2.5in}}}
  \caption{Any point in $S_0\cap \partial U(\Gamma)$ is also in
  $U(\{Y_t \colon t\in[0,T_{6}]\} \cup J(4)))$, as the third part avoids
  the set $\{Y_t \colon t\in[0,T_1]\}$ completely and hits the set
  $\{Y_t \colon t\in[T_1,T]\}$, i.e. the dashed second part of the path,  
  only inside $\ball(Y_{T}, \gamma)$.}
  \label{pic}
\end{figure}

Observe, possibly with the help of Figure~\ref{pic}, that under the intersection of these three events we have
$$\dim(S_{0}\cap D \cap \partial U(\{Y_t \colon t\in[0,T_{6}]\} \cup J(4))) \geq 2-\xi(4).$$ 
Moreover, the three events, and by the strong Markov property also their intersection, have positive probability.
Indeed, for events~(1) and~(3) this is obvious. For event~(2) recall from \cite[5.3]{La05} that, given $Y_{T_1}$, 
the process $(Y_{T_1+s} \colon s\in[0,T_2-T_1])$ is 
distributed like an ordinary Brownian motion conditioned to hit $J(2)$ before the real line. It is therefore absolutely 
continuous with respect to Brownian motion and the claim follows from Lemma~\ref{01-2}.
This completes the proof of Lemma~\ref{01-1}.
\bigskip

{\bf Acknowledgements:} This paper is based on material from the first author's 
PhD thesis. We would like to thank Heinrich v.~Weizs\"acker for many helpful discussions.

\bigskip

\end{document}